\def\r{\mathbb R}
\def\l{\mathbb L}
\def\h{\mathbb H}
\newtheorem{theorem}{Theorem}[section]
\newtheorem{definition}[theorem]{Definition}
\newtheorem{proposition}[theorem]{Proposition}
\newtheorem{remark}[theorem]{Remark}
\newtheorem{corollary}[theorem]{Corollary}
\newtheorem{example}[theorem]{Example} 
\title{The two-dimensional analogue of the  Lorentzian catenary and the Dirichlet problem}
\author{Rafael L\'opez\footnote{Partially supported by the grant no. MTM2017-89677-P, MINECO/AEI/FEDER, UE.}\\
Departamento de Geometr\'{\i}a y Topolog\'{\i}a\\
 Instituto de Matem\'aticas (IEMath-GR)\\
 Universidad de Granada\\
 18071 Granada, Spain\\
\texttt{rcamino@ugr.es}
}
\date{}
\begin{document}
\maketitle
\begin{abstract} 

We generalize in   Lorentz-Minkowski space $\l^3$ the two-dimensional analogue of the catenary of Euclidean space. We solve the Dirichlet problem when the bounded domain is mean convex and the boundary data has a spacelike extension to the domain. We also classify all singular maximal surfaces of $\l^3$ invariant by a uniparametric group of translations and rotations.
\end{abstract}

\noindent {\it Mathematics Subject Classification:} 53A10, 53C42 \\
\noindent {Keywords:} singular maximal surface, Dirichlet problem, invariant surface

%%%%%%%%%%%%%%%%%%%%%%%
 \section{Introduction and motivation}
%%%%%%%%%%%%%%%%%%%%%%%%%%%%%%%%%%%%%%%%%%

The purpose of this paper is to investigate the physical problem of characterizing the surfaces     in  Lorentz-Minkowski space with lowest gravity center and solve the corresponding Dirichlet problem.  The existence of a variety of causal vectors in the Lorentzian setting makes that appear several issues that need to be fixed. Firstly, we recall this problem in the Euclidean space  in order to motivate our definitions.  Let $\r^2$ be the  Euclidean plane with  canonical coordinates $(x,y)$ where  the $y$-axis indicates the gravity direction. Consider the physical problem of finding the curve in the halfplane $y>0$ with the lowest gravity center.  If  the curve is $y=u(x)$, then $u$ satisfies the equation
 \begin{equation}\label{cat}
\frac{u''}{1+u'^2}=\frac{1}{u}.
\end{equation}
The solution of this equation is  known the catenary
 $$u(x)=\frac{1}{a}\cosh(ax +b), \ a,b\in\r, a\not=0.$$
Equation \eqref{cat} can be expressed in terms of the curvature $\kappa$ of the curve as 
\begin{equation}\label{cat0}
\kappa=\frac{\langle \textbf{n},\vec{a}\rangle}{y},
\end{equation}
  where $\textbf{n}$   is the unit normal vector  and $\vec{a}=(0,1)$. In particular, equation \eqref{cat0} prescribes the angle that makes the vector $\textbf{n}$ with the vertical direction.
  
The   generalization in Euclidean $3$-space $\r^3$  of the property of the catenary is to find surfaces in the halfspace $z>0$ with the lowest gravity center. If  $(x,y,z)$ denote  the canonical coordinates of $\r^3$ and $z$ indicates the direction of the gravity, these surfaces characterize by means of   the equation
$$H=\frac{\langle N,\vec{a}\rangle}{z},$$
  where $H$ is the mean curvature of the surface and $\vec{a}=(0,0,1)$.   The surface is called in the literature the {\it two-dimensional analogue of the catenary} (\cite{bht,dh}).   Historically, this problem goes back to early works of Lagrange and Poisson on the equation that models a heavy surface in vertical gravitational field. If  we embed $\r^2$ as the $xz$-plane by identifying the $y$-axis of $\r^2$ with the $z$-axis of $\r^3$, and   we rotate the   catenary with respect to the $x$-axis, we obtain the catenoid $a^2(y^2+z^2)=\cosh^2(x)$, which is  the only non-planar rotational minimal surface of $\r^3$. 
  
  More general,  given a  constant $\alpha\in\r$,   a surface in the halfspace $z>0$ is called  a singular minimal surface if satisfies
\begin{equation}\label{cat1}
H=\alpha\frac{\langle N,\vec{a}\rangle}{z}.
\end{equation}

The theory of singular minimal surfaces has been intensively studied from the works of Bemelmans, Dierkes and Huisken, among others. Without to be a complete list, we refer to    \cite{bd,bht,di,di2,dh,lo3,lo4,lo6,ni2}. 

Once presented the problem in the Euclidean space, we  proceed to  generalize it in  the Lorentz-Minkowski space.  As in the Euclidean case, we begin with the one-dimensional case. Let $\l^2$ be the Lorentz-Minkowski plane defined as the affine $(x,y)$-plane $\r^2$ endowed with   the metric $dx^2-dy^2$. Here we use the  usual terminology of the Lorentz-Minkowski space: see \cite{on} as a general reference and   \cite{lo1} for curves and surfaces in Lorentz-Minkowski space. In what follows, we will assume that for a given set, the causal character is the same in all its points, that is, we do not admit the existence of   points with different causal character. 

A first issue     is that  in $\l^2$ it does not make sense the notion of gravity in $\l^2$ because the $y$-coordinate represents the time  in the Lorentzian context. Thus we need to view the initial  problem as a problem of finding curves in $\l^2$ with prescribed angle between the normal vector and a fixed direction, such as it was shown in  equation \eqref{cat0}. There appear  two new  issues.
Firstly there are three types of curves in $\l^2$ according its causal character, namely, spacelike, timelike and  lightlike and the behavior of each of these curves is completely different. Because  our interest is to  keep the Riemannian sense,  we will only consider spacelike curves. 

 A second issue  is the choice of the axis with respect to what we measure the angle of the normal vector $\textbf{n}$. Notice that in Euclidean plane both axes are indistinct  but in $\l^2$ the $y$-axis and the $x$-axis are not interchangeable by a rigid motion.  Thus it arises the problem what axis to be fixed. Since for a spacelike curve, the vector $\textbf{n}$  is   timelike, we will measure the angle between $\textbf{n}$ and  the $y$-axis, which is also timelike. This  is also justified because   it makes sense to define the angle between two timelike vectors (\cite[p.144]{on}).   After all these considerations, let us proceed.  

Let  $\gamma=\gamma(s)$  be  a spacelike curve parametrized by the arc-length $s\in I$ and contained in the halfplane $y>0$ of $\l^2$. The curvature $\kappa$ of $\gamma$ is defined by $\gamma''(s)=\kappa(s)\textbf{n}(s)$ where $\textbf{n}$ is a unit normal vector of $\gamma$. Here we are assuming $\kappa\not=0$.  Motivated by the  equation \eqref{cat0}, we ask for those spacelike curves of $\l^2$ that satisfy the same equation \eqref{cat0} where $\textbf{a}=(0,1)$.   
If $\gamma$ is a graph  $y=u(x)$,  then $\gamma(x)=(x,u(x))$, which is not parametrized by the arc-length. Then $\textbf{n}=(u',1)/\sqrt{1-u'^2}$, $\langle \textbf{n},\vec{a}\rangle=-1/\sqrt{1-u'^2}$  and 
$$\kappa(x)=-\frac{1}{1-u'^2}\langle\gamma''(x),\textbf{n}(x)\rangle=\frac{u''(x)}{(1-u'(x)^2)^{3/2}}.$$
Let us observe that $u'^2<1$ because $\gamma$ is a spacelike curve.
Equation  \eqref{cat0} is    now 
\begin{equation}\label{eq0}
\frac{u''}{1-u'^2}=-\frac{1}{u},
\end{equation}
which will be the Lorentzian model of \eqref{cat} that we are looking for. The spacelike condition $u'^2-1<0$ is an extra hypothesis comparing with the Euclidean case. For example,   $u(x)= \sinh(x)$, with $u>0$, solves \eqref{eq0}, but   $u'^2>1$. So,   the corresponding curve $y=u(x)$ is a timelike curve. In contrast, because we are assuming that the curve is spacelike,   the right solution of \eqref{eq0} is   
\begin{equation}\label{eq00}
u(x)=\frac{1}{a}\sin(ax+b),\quad x\in\left(-\frac{b}{a},\pi-\frac{b}{a}\right),
\end{equation}
where $a\not=0\ a,b\in\r$. This curve will be the analogue catenary in $\l^2$. As in the Euclidean case,  we introduce a constant $\alpha\in\r$ and we consider the analogous equation of \eqref{cat0}, namely, 
\begin{equation}\label{cat11}
\kappa=\alpha\frac{\langle\textbf{n},\vec{a}\rangle}{\langle p,\vec{a}\rangle}=-\alpha\frac{\langle\textbf{n},\vec{a}\rangle}{y},
\end{equation}
where $p=(x,y)\in\l^2$.  For instance,  the   curve \eqref{eq00} is the solution   for $\alpha=-1$. 

 Following the same steps done in the Euclidean setting, we embed  $\l^2$ in the Lorentz-Minkowski $3$-space $\l^3$.  Here $\l^3$ is the affine  Euclidean $3$-space endowed with the metric $dx^2+dy^2-dz^2$. Then  $\l^2$ is identified with the $xz$-plane, the $y$-axis of $\l^2$  with the $z$-axis of $\l^3$  and the vector $(0,1)\in\l^2$ with $\vec{a}=(0,0,1)$.
Definitively, the objects of our study in this paper are described in the following definition.

\begin{definition} Let $\alpha$ be a nonzero real number. A spacelike surface $S$ in the halfspace $z>0$ of  $\l^3$ is called an  $\alpha$-singular maximal surface if satisfies 
\begin{equation}\label{eqL}
H(p)=\alpha\frac{\langle N(p),\vec{a}\rangle}{\langle p,\vec{a}\rangle}=-\alpha\frac{\langle N(p),\vec{a}\rangle}{z}\quad (p\in S),
\end{equation}
where $N$ is a unit normal vector field on $S$ and   $H$ is the mean curvature.
\end{definition}

Here $H$ the trace of the second fundamental form of $S$, that is, the sum of the principal curvatures. We will omit the constant $\alpha$ if it is understood in the context. Recently, these surfaces have been studied in \cite{mt} relating  the Riemannian and the Lorentzian settings by means of   a Calabi type correspondence.

   In view of \eqref{cat0}, and as a motivation of this paper, the case $\alpha=-1$ in equation \eqref{eqL} is the corresponding   {\it two-dimensional analogue  of the Lorentzian catenary}.  Other known examples appear when $\alpha=2$ because in such a case,  the surface is a minimal surface in the steady state space (\cite{lo2}). Another special example is  the hyperbolic plane $\h^2(r)=\{p\in\l^3:\langle p,p\rangle=-r^2, z>0\}$, $r>0$. This surface has mean curvature $H=2/r$ for $N(p)=p/r$. It is clear that $\h^2(r)$ satisfies \eqref{eqL} for $\alpha=2$. Even more, $\h^2(r)$ satisfies \eqref{eqL} {\it for any} vector $\vec{a}$.

On the other hand, we extend a similar property that has the catenary   in Euclidean space. Indeed, we take 
  the catenary \eqref{eq00} and we rotate   with respect to the $x$-axis. The rotations that leave   pointwise fixed the $x$-axis are described by 
$$\left\{\left(\begin{array}{ccc}1&0&0\\ 0 &\cosh\theta&\sinh\theta\\ 0&\sinh\theta&\cosh\theta\end{array}\right):\theta\in\r\right\}.$$
 For a curve $z=u(x)$, namely, $\gamma(x)=(x,0,u(x))$, $x\in I\subset\r$,  contained in the $xz$-plane, the corresponding rotational surface $S$ is parametrized by 
\begin{equation}\label{eqx}
X(x,\theta)=(x,u(x)\sinh\theta ,u(x)\cosh\theta  ), \ \theta\in \r.
\end{equation}
If $u(x)=\sin(ax+b)/a$, it is not difficult to see that the corresponding rotational surface \eqref{eqx} has zero mean curvature, that is, $S$ is a maximal surface of $\l^3$.  This surface is called in the literature the catenoid of second kind  or the hyperbolic catenoid. 

\begin{remark}If we rotate the curve $u(x)=  \sinh(ax+b)/a$, the timelike solution of \eqref{eq0}, with respect to the $x$-axis,   the rotational surface is a timelike surface with zero mean curvature (\cite{lo0}). Similarly, any  vertical straight line  is a timelike curve that satisfies \eqref{cat0} and if  we rotate with respect to the $x$-axis, we obtain a (timelike) plane parallel to the $yz$-plane, which has zero mean curvature everywhere.
 \end{remark}
 
As a conclusion, the generalization in $\l^3$ of the two-dimensional analogue of the catenary, or more generally,   singular maximal surfaces in Lorentz-Minkowski space $\l^3$, is carried out for spacelike surfaces and the angle between $N$ and $\vec{a}$ is measured with respect to the (timelike) $z$-axis. We have also discussed that there are other possibilities to generalize the initial problem in  $\l^3$, although all them less justified, as for example, changing the axis     $\vec{a}=(0,0,1)$  by   $(1,0,0)$ (spacelike) or $(1,0,1)$ (lightlike). Also, we may consider timelike surfaces and measuring the angle between $N$ with respect to an axis of $\l^3$. 

In this paper  we will also be interested to solve the Dirichlet problem of the singular maximal surface equation. Since  a spacelike surface is locally  the graph of a function $z=u(x,y)$, the nonparametric form of equation \eqref{eqL} is 
 \begin{equation}\label{eq2}
 \mbox{div}\frac{D  u}{\sqrt{1-|D  u|^2}}=\alpha\frac{1}{u\sqrt{1-|D  u|^2}},
 \end{equation}
together the spacelike condition $|Du|<1$.  The left-hand side of this equation is the mean curvature of the graph $z=u(x,y)$ computed with respect to the 
upwards orientation
 $$N=\frac{1}{\sqrt{1-|Du|^2}}(Du,1).$$
  Comparing \eqref{eq2} with the Riemannian case (\cite{di,di1,di2,lo5}), this equation is not uniformly elliptic and, as a consequence, this requires   to ensure that $|Du|$ is bounded away from $1$.
 
 This paper is organized as follows. In Section \ref{sec2} we classify all  singular maximal surfaces that are invariant by a uniparametric group of translations and  of rotations. In Section \ref{sec3} we describe the solutions of \eqref{eqL} that are invariant by rotations about the $z$-axis and finally, in Section \ref{sec4} we solve the Dirichlet problem associated to equation \eqref{eq2} for  mean convex domains and   arbitrary boundary data.

%%%%%%%%%%%%%%%%%%%%%%%
\section{Invariant singular maximal surfaces}\label{sec2}
%%%%%%%%%%%%%%%%%%%%%%%%%%%%%%%%%%%

In this section we classify and describe  all singular maximal surfaces that are invariant by a uniparametric group of translations or of rotations   of $\l^3$. Firstly, we notice that  some transformations of the affine Euclidean space $\r^3$    preserve the singular maximal surface equation. To fix the terminology, a vector $\vec{v}\in\r^3$ is called horizontal direction if it is parallel to the $xy$-plane and it is called   vertical  if is parallel to the $z$-axis. 

It is clear that a solution of \eqref{eqL} is   invariant by a translation along a horizontal direction, that is, if $S$ is an $\alpha$-singular maximal surface, then $S+\vec{v}$ is also an $\alpha$-singular maximal surface, where $\vec{v}$ is a horizontal vector of $\r^3$. Similarly, the same property holds if we rotate $S$ with respect to a vertical direction because the term $\langle N,\vec{a}\rangle $ and the denominator $z$ in \eqref{eqL} are invariant by this type of rotations. Finally,  if $\lambda>0$ is a positive real number, and $T_\lambda(p)=p_0+\lambda(p-p_0)$ is the dilation with center $p_0\in\r^2\times\{0\}$, then $T_\lambda(S)$ is an $\alpha$-singular maximal surface.  

\begin{remark}\label{re1}
We point out  that a rigid motion of $\l^3$ does not preserve in general the equation \eqref{eqL} because the denominator $z$ may change in general by the motion.
\end{remark}

As we have announced, a  natural source of examples of singular maximal surfaces of $\l^3$ finds in the class of invariant surfaces by a uniparametric group of rigid motions. The key point is that equation \eqref{eqL}, which locally is the partial differential equation \eqref{eq2}, changes into an ordinary differential equation. In particular, by standard theory, there always is a solution  for any initial conditions.

%%%%%%%%%%%%%%%%%%%%%%%%%
\subsection{Surfaces invariant by translations}
%%%%%%%%%%%%%%%%%%

We begin the study of the   surfaces invariant by a uniparametric group of translations. Since the rulings generated by this group are straight lines contained in the surface, and the surface is spacelike, then any ruling is a spacelike line. Thus the vector generating the group of translation must be spacelike.  Let   $\vec{v}$ be a unit spacelike vector and consider a surface $S$  invariant by the group of translations generated by $\vec{v}$. Then $S$ parametrizes as 
$X(s,t)=\gamma(s)+t\vec{v}$, where $\gamma$ is a planar  spacelike curve of $\l^3$ contained in a (timelike)  orthogonal plane to $\vec{v}$. Equation \eqref{eqL} is 
$$ \kappa\,\mbox{det}(\gamma',\vec{v},\textbf{n})=\alpha\frac{\mbox{det}(\gamma',\vec{v},\vec{a})}{\gamma_3+tv_3},$$
where $\gamma=(\gamma_1,\gamma_2,\gamma_3)$ and $\vec{v}=(v_1,v_2,v_3)$.   We consider the orientation in $\gamma$  so $\gamma'\times\vec{v}=\textbf{n}$. Since $\textbf{n}$ is a unit timelike vector,  the above equation is now 
\begin{equation}\label{ejt}
\kappa  (\gamma_3+tv_3)+\alpha\langle\textbf{n},\vec{a}\rangle=0.
\end{equation}
This is a polynomial equation on $t$, hence 
$$\kappa v_3=0,\quad \kappa  \gamma_3 +\alpha\langle\textbf{n},\vec{a}\rangle=0.$$
Since $\kappa\not=0$, we deduce  that $v_3=0$ and  
 $\kappa\gamma_3+\alpha\langle\textbf{n},\vec{a}\rangle=0$. Then $\vec{v}$ is a horizontal vector and $\gamma$ is a planar curve contained in a vertical plane. After a horizontal translation and a rotation about the $z$-axis, we assume that this plane is the $xz$-plane  which can be identified with $\l^2$. Furthermore, the equation  $\gamma_3 +\alpha\langle\textbf{n},\vec{a}\rangle=0$  means that  $\gamma$ satisfies, as a planar curve of $\l^2$,  the one-dimensional singular maximal surface equation \eqref{cat11}. The converse of this result is immediate. 
 
 \begin{proposition} \label{pr211}
 Let $S$ be an $\alpha$-singular maximal surface of $\l^3$ invariant by a uniparametric group of translations generated by $\vec{v}$ and denote by $\gamma$ its generatrix. Then   $\vec{v}$ is a horizontal vector,  $\gamma$ is contained in a plane orthogonal to $\vec{v}$ and $\gamma$, as a planar curve, satisfies \eqref{cat11}. Conversely,    if $\gamma$ is a curve in $\l^2$ that satisfies \eqref{cat11} and, if  we embed this curve in the $xz$-plane as usually, then the surface $X(s,t)=\gamma(s)+t(0,1,0)$  is an $\alpha$-singular maximal surface.
 \end{proposition}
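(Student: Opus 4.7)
The plan is to exploit the very restrictive form of \eqref{eqL} when $S$ is a cylinder over a planar curve, and show that the fact that $t$ only appears linearly forces both the geometric constraints on $\vec{v}$ and the reduction to the one-dimensional equation \eqref{cat11}.

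First I would argue that $\vec{v}$ must be spacelike. The orbits of the translation group are straight lines contained in $S$, and since $S$ is spacelike these lines must be spacelike, so $\vec{v}$ is a unit spacelike vector. Consequently I may parametrize $S$ by $X(s,t)=\gamma(s)+t\vec{v}$, where $\gamma$ is a spacelike curve, parametrized by arc length, contained in the plane $\Pi$ through the origin orthogonal to $\vec{v}$ (which is timelike since $\vec{v}$ is spacelike). Writing $\vec{v}=(v_1,v_2,v_3)$ and $\gamma=(\gamma_1,\gamma_2,\gamma_3)$, the third coordinate of a point of $S$ is $\gamma_3(s)+tv_3$, which is the crucial term in the denominator of \eqref{eqL}.

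Second, I would compute the mean curvature. With the orientation $N=\gamma'\times\vec{v}$, a direct calculation using that $\vec{v}$ is a ruling direction shows $N$ equals the timelike unit normal $\textbf{n}$ of $\gamma$ (viewed as a planar curve in $\Pi\cong\l^2$) and that the only nonzero coefficient of the second fundamental form is $\langle X_{ss},N\rangle=\kappa\det(\gamma',\vec{v},\textbf{n})$; hence the mean curvature of $S$ reduces (up to the controlled sign) to the planar curvature $\kappa$ of $\gamma$. Substituting into \eqref{eqL} and clearing denominators yields exactly the displayed identity \eqref{ejt}
\begin{equation*}
\kappa\,(\gamma_3+tv_3)+\alpha\langle\textbf{n},\vec{a}\rangle=0.
\end{equation*}

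Third, I would observe that this relation is affine in $t$ but the other quantities ($\kappa$, $\gamma_3$, $\textbf{n}$) depend only on $s$. Setting the two coefficients to zero and using $\kappa\not\equiv 0$ gives $v_3=0$ and $\kappa\gamma_3+\alpha\langle\textbf{n},\vec{a}\rangle=0$. The first identity says $\vec{v}$ is horizontal, so $\Pi$ is a vertical timelike plane; then invoking the invariance of the singular maximal surface equation under horizontal translations and rotations about the $z$-axis (noted in the text just before the statement), I reduce to the case $\Pi$ equal to the $xz$-plane, and the second identity is then the one-dimensional equation \eqref{cat11} for $\gamma\subset\l^2$.

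For the converse, I would simply traverse the argument in reverse: starting from a planar spacelike curve $\gamma$ in the $xz$-plane satisfying \eqref{cat11}, the cylinder $X(s,t)=\gamma(s)+t(0,1,0)$ is spacelike because $(0,1,0)$ is a unit spacelike vector orthogonal to the $xz$-plane, and the mean curvature computation above, run forward with $\vec{v}=(0,1,0)$ (so $v_3=0$), shows \eqref{eqL} holds at every point. The main technical point where I would be careful is the sign management in the Lorentzian cross product and in the identification $N=\textbf{n}$, because a sign error there would propagate into \eqref{ejt} and spoil the separation-of-variables argument; once the signs are pinned down, everything else is algebraic.
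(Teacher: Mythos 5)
Your argument is correct and follows essentially the same route as the paper: spacelikeness of the rulings forces $\vec{v}$ spacelike, the cylindrical parametrization reduces \eqref{eqL} to the identity \eqref{ejt}, which is affine in $t$, and vanishing of the two coefficients (using $\kappa\neq 0$) gives $v_3=0$ and the one-dimensional equation \eqref{cat11}, with the converse obtained by reversing the computation. Your extra care with the sign of $N=\gamma'\times\vec{v}=\textbf{n}$ matches the orientation convention the paper fixes before deriving \eqref{ejt}.
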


In view   of this proposition, consider the one-dimensional case of equation \eqref{eqL}.  Let   $\gamma(s)=(x(s),y(s))$ be a spacelike curve in $\l^2$ that satisfies \eqref{cat11}. Since $\gamma$ is spacelike, then $x'^2-y'^2>0$, in particular, $x'(s)\not=0$ for every $s$ and thus $\gamma$ is globally the graph of a function $u=u(x)$, $x\in I\subset\r$. Equation \eqref{cat11} is now
\begin{equation}\label{eq-one}
\frac{u''}{1-u'^2}=\alpha\frac{1}{u},\quad u>0,  u'^2<1.
\end{equation}
It is possible to find some explicit solutions of \eqref{eq-one} by simple quadratures.  In the Introduction we have seen that if  $\alpha=-1$, the solution is $u(x)= \sin(ax+b)/a$, where $a\not=0$, $a,b\in\r$ and  where $x$ is defined in some interval to ensure that $u>0$. If $\alpha=1$, it is easy to find that the solution of \eqref{eq-one} is  
 $$u(x)=\frac{1}{a}\sqrt{1+a^2 x^2+2abx+b^2},\ a,b\in\r, a>0.$$
After a change of variable,   this function $u$ writes as $u(x)=\sqrt{1+a^2x^2}/a$, $a>0$. It is immediate that    $u$ is the upper branch of the  hyperbola $a^2(x^2-y^2)=-1$. This curve,  viewed as a planar curve in $\mathbb{L}^2$, has  nonzero constant curvature $\kappa=a$. The generated surface by Proposition \ref{pr211} is the right-cylinder of equation $a^2(x^2-z^2)=-1$. 

\begin{remark}\label{rem22}
Such as it was done for the catenary $u(x)=\sin(ax+b)/a$,   if we rotate the   curve $u(x)=\sqrt{1+a^2x^2}/a$  with respect to the $x$-axis,  we obtain the hyperbolic plane   $\h^2(1/a)$. \end{remark}

\begin{remark} Similarly as in the case $\alpha=-1$, there is a timelike solution of \eqref{eq-one} by replacing the spacelike condition $u'^2<1$ by $u'^2>1$. The solution if now $u(x)=\sqrt{a^2x^2-1}/a$, where $a>0$ and $x>1/a$. The function $u$  is the positive part of the hyperbola $x^2-y^2=1/a^2$, which is a timelike curve. If we rotate about the $x$-axis, the generated surface is $x^2+y^2-z^2=1/a^2$. This surface is the (upper part of) de Sitter space $\mathbb{S}^2_1(1/a)=\{p\in\l^3:\langle p,p\rangle=1/a^2\}$. This surface satisfies \eqref{eqL} when $\alpha=2$ and plays the same role than the hyperbolic plane in the family of timelike surfaces of $\l^3$.
\end{remark}

We now describe the geometric properties of the solutions of \eqref{eq-one}. See figure \ref{fig1}.

\begin{theorem} Let $u=u(x)$ be a solution of \eqref{eq-one}, $x\in I$, where $I\subset\r$ is the maximal domain of $u$. Then $u$ is symmetric about a vertical line and $I=\r$ if $\alpha>0$ or $I$ is a bounded interval if $\alpha<0$. Furthermore:
\begin{enumerate}
\item Case $\alpha>0$. The function $u$ is convex with a unique global minimum, $\lim_{r\rightarrow \infty}u(r)=\infty$ and  $\lim_{r\rightarrow \infty}u'(r)=1$.
\item Case $\alpha<0$. The function $u$ is concave with a unique   global maximum. If $I=(-b,b)$, then $\lim_{r\rightarrow b}u(r)=0$ and $\lim_{r\rightarrow b}u'(r)=-1$.
\end{enumerate}
\end{theorem}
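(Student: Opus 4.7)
The plan is to extract a first integral of \eqref{eq-one} and then read every qualitative property from it combined with the sign of $u''$ given by the equation itself. Multiplying \eqref{eq-one} by $u'$ produces $-\tfrac12(\log(1-u'^2))'=\alpha(\log u)'$, which integrates to
\begin{equation}\label{firstint}
1-u'^2=\frac{1}{C^{2}\,u^{2\alpha}}
\end{equation}
for some constant $C>0$; on the other hand \eqref{eq-one} rewrites as $u''=\alpha(1-u'^2)/u$, so $u''$ has the constant sign of $\alpha$ (strictly convex for $\alpha>0$, strictly concave for $\alpha<0$). From \eqref{firstint} the phase trajectory $(u,u')$ lies on the smooth algebraic curve $C^{2}u^{2\alpha}(1-u'^2)=1$, which meets the axis $u'=0$ only at $u=M:=C^{-1/\alpha}$; a short time-of-flight computation (near $u=M$ the integral $\int du/|u'|$ has only an integrable $1/\sqrt{u-M}$-type singularity) shows that the maximal solution actually reaches this critical value at some interior $x_0\in I$. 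Translating so that $x_0=0$, the function $v(x):=u(-x)$ satisfies the same ODE with the same initial data $v(0)=M$, $v'(0)=0$, and uniqueness gives $u(-x)=u(x)$: this is the claimed symmetry.

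For $\alpha>0$, \eqref{firstint} forces $u\geq M$, so $0$ is the unique global minimum and $u$ is strictly convex. On any bounded subinterval of $I$ the slope bound $|u'|\leq1$ keeps $u$ bounded, and then \eqref{firstint} keeps $u$ bounded away from $0$ and $u'^{2}$ bounded away from $1$, so the right hand side of \eqref{eq-one} stays continuous and the standard continuation theorem forces $I=\r$. Since $u''>0$ and $u'(0)=0$, the derivative $u'$ is strictly positive and increasing on $(0,\infty)$, hence $u(x)\to\infty$ as $x\to\infty$; \eqref{firstint} then immediately gives $u'(x)\to 1$.

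For $\alpha<0$, \eqref{firstint} rewrites as $u'^{2}=1-(u/M)^{-2\alpha}$ and forces $0<u\leq M$, so $0$ is the unique global maximum and $u$ is concave; the solution may be continued only while $u>0$, and \eqref{firstint} makes $u'\to-1$ whenever $u\to 0^{+}$. The distance from $0$ to the right endpoint therefore equals
$$
b=\int_{0}^{M}\frac{du}{\sqrt{1-(u/M)^{-2\alpha}}},
$$
which is finite because the integrand tends to $1$ as $u\to 0^{+}$ and has only an integrable $(M-u)^{-1/2}$-type singularity at $u=M$. Hence $I=(-b,b)$ is bounded and $u(r)\to 0$, $u'(r)\to-1$ as $r\to b^{-}$. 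The main technical hurdle is precisely these two time-of-flight estimates: one must verify both that the critical point $u=M$ is attained at an interior point of $I$ (so the symmetry and extremum analysis applies) and that the outward flight time diverges for $\alpha>0$ while remaining finite for $\alpha<0$; once these are in hand, everything else follows mechanically from \eqref{firstint} and the sign of $u''$.
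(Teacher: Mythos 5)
Your argument is correct and shares the paper's skeleton: the first integral obtained by multiplying \eqref{eq-one} by $u'$ (the paper's \eqref{eq-f}), the observation that $u''$ has the fixed sign of $\alpha$, and symmetry via uniqueness of the initial value problem posed at the critical point. Where you genuinely diverge is in the key lemma that a critical point exists in the interior of $I$. The paper argues by contradiction, assuming $u'$ has constant sign and running a separate endpoint analysis for each sign of $\alpha$ (the solution would have to reach $u=0$ or flatten out at infinity, each incompatible with the ODE or with \eqref{eq-f}). You instead read off from the first integral that the phase curve crosses $u'=0$ only at $u=M=C^{-1/\alpha}$ and use the time-of-flight integral $\int du/|u'|$, whose singularity at $u=M$ is of integrable square-root type, to conclude that the critical value is attained at a finite interior point. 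Your route treats both signs of $\alpha$ uniformly and yields, as a bonus, the explicit formula for the half-length $b$ of the interval when $\alpha<0$, from which $b<\infty$ and the boundary limits $u\to 0$, $u'\to -1$ drop out simultaneously (the paper gets these from concavity plus \eqref{eq-f}). The one step you should write out in full is the preliminary to the time-of-flight computation: along a monotone branch one must first verify that $u$ actually tends to $M$ at the relevant end of $I$ (otherwise either the solution extends, contradicting maximality, or $u'$ stays bounded away from zero and $u$ leaves the admissible range $u>0$, $u\geq M$ or $u\leq M$); only then does the integrability of $\int du/|u'|$ place the critical point at a finite $x_0$. You flag this as the technical hurdle, and the continuation and positivity arguments you already deploy for the statement $I=\r$ supply exactly what is needed, so this is a matter of exposition rather than a gap.
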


\begin{proof}
If $u$ has a critical point at $r=r_o$, then $u''(r_o)=\alpha/u(r_o)$ has the same sign than $\alpha$. Hence, there is one critical point at most that will be a global minimum (resp. maximum) if $\alpha>0$ (resp. $\alpha<0$).

{\it Claim: There exists a critical point of $u$.}

Suppose now that the claim is proved and we finish the proof of theorem. After a change in the variable $x$, we  suppose that $x=0$ is the critical point, $u'(0)=0$. Then $u$ is the solution of \eqref{eq-one} with initial conditions $u(0)=u_0>0$ and $u'(0)=0$. It is clear that $u(-s)$ is also a solution of the same initial value problem, so $u(s)=u(-s)$ by uniqueness. This proves that $u$ is symmetric about the $y$-axis.

Multiplying \eqref{eq-one} by $u'$, we obtain a first integral 
\begin{equation}\label{eq-f}
\frac{1}{1-u'^2}=\mu  u^{2\alpha},
\end{equation}
for some positive constant $\mu >0$.

\begin{enumerate}
\item Case $\alpha>0$.  Since $u(x)\geq u_0$,  we deduce from \eqref{eq-one} that  $u'$  and $u''$ are  bounded functions and this implies that the maximal domain is $\r$. Since $u$ is a convex function, then $u(r)\rightarrow\infty$ as $r\rightarrow\infty$ and from \eqref{eq-f}, we conclude that  $u'(r)\rightarrow 1$ as $r\rightarrow\infty$.
\item Case $\alpha<0$. By symmetry, $I=(-b,b)$ for some $b\leq\infty$. Since $u$ is a positive concave function, then $b<\infty$. Using the concavity of $u$ again, and because $u'^2<1$, then the graph of $u$ must meet the $x$-axis, that is, $\lim_{r\rightarrow b}u(r)=0$. From \eqref{eq-f}, we deduce $\lim_{r\rightarrow b}u'(r)^2=1$, and by concavity, $\lim_{r\rightarrow b}u'(r)=-1$.
\end{enumerate}

We now prove the claim. The proof is by contradiction. Assume that the sign of $u'$ is constant  and denote $I=(a,b)$  with $-\infty\leq a<b\leq\infty$.

\begin{enumerate}
\item Case $\alpha>0$. We   suppose that $u'>0$ in $I$ (similar argument if $u'$ is negative). Since $u$ is increasing and $u'$  and $u''$ are bounded close $r=b$, we deduce that $b=\infty$ by standard   theory. If $-\infty<a$, then $\lim_{r\rightarrow a}u(r)=0$ because  on the contrary, we could extend $u$ beyond $r=a$ because $u'$ and $u''$ would be  bounded functions. Therefore $\lim_{r\rightarrow a}u'(r)^2=1$ by \eqref{eq-f}. Since $u'>0$, this limit is just $1$.   This is a contradiction because $u'$ is an increasing function  and we would have $u'>1$ in $I$, which is not possible by the spacelike condition.

Thus $a=-\infty$. Since $u$ is increasing and $u>0$ in $\r$, we find $\lim_{r\rightarrow -\infty}u(r)=c\geq 0$. Because $u'>0$ and $u''>0$, then   $\lim_{r\rightarrow -\infty}u'(r)=\lim_{r\rightarrow -\infty}u''(r)=0$. However, by \eqref{eq-one},    and letting $ r\rightarrow -\infty$, we have $u''(r)$ goes to  $\alpha/c\not=0$ if $c>0$ or to $\infty$ if $c=0$, obtaining  a contradiction. 
\item Case $\alpha<0$. We   suppose that $u'>0$ in $I$ (similar argument if $u'$ is negative). Since $u'$  and $u''$ are bounded for $r$ close to $b$, then $b=\infty$ and by concavity,  we deduce that $-\infty<a$. If $u$ is bounded from above with $\lim_{r\rightarrow \infty}u(r)=c>0$, then $\lim_{r\rightarrow \infty}u'(r)=0$ and since $u''<0$, then $\lim_{r\rightarrow \infty}u''(r)=0$. By  \eqref{eq-one}, we find $\lim_{r\rightarrow \infty}u''(r)=\alpha/c<0$, a contradiction. Thus $\lim_{r\rightarrow \infty}u(r)=\infty$. By using \eqref{eq-f}, we conclude $\lim_{r\rightarrow \infty}u'(r)^2=1$, so this limit is $1$: a contradiction because   $u'$ is a decreasing function and we would have $u'>1$ in the interval $I$, which is not possible.
\end{enumerate}
\end{proof}

 \begin{figure}[hbtp]
\centering \includegraphics[width=.4\textwidth]{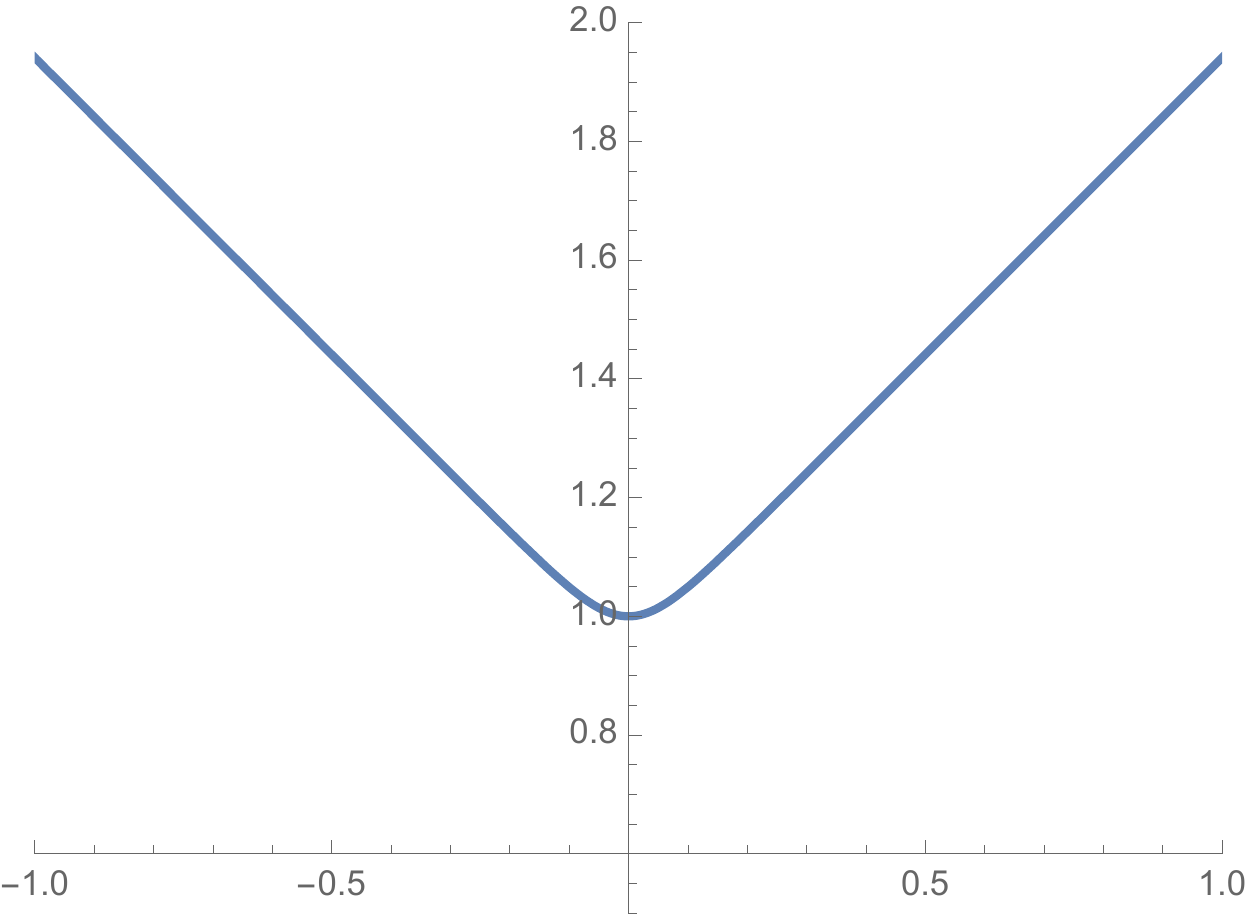}\quad \includegraphics[width=.4\textwidth]{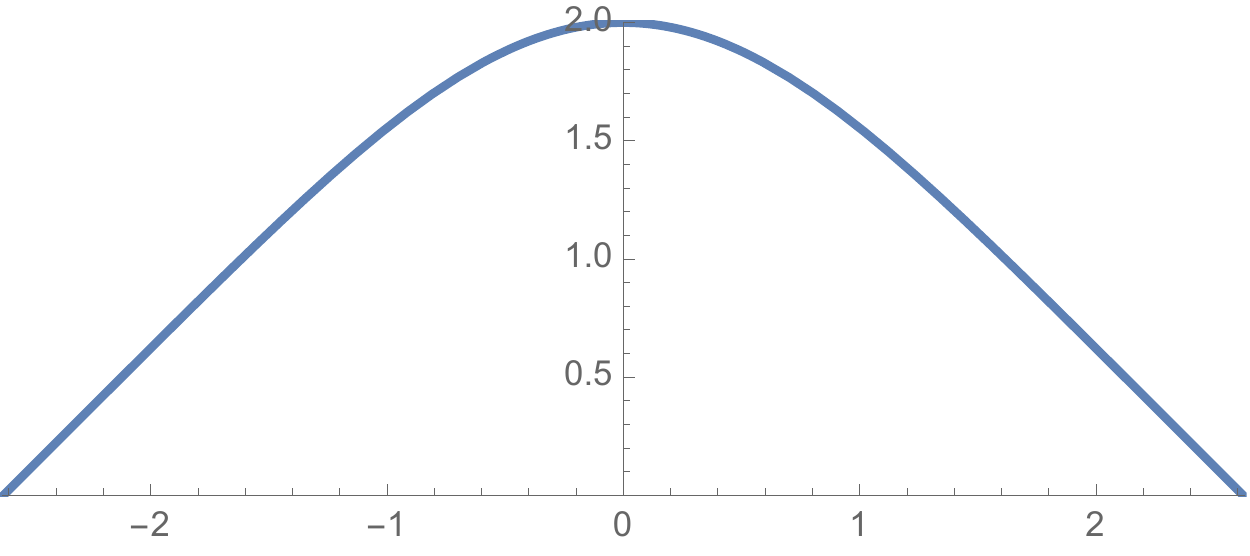}
 \caption{Solutions of \eqref{eq-one}. Left:  $\alpha=1$. Right:  $\alpha=-2$ }\label{fig1}
 \end{figure}
%%%%%%%%%%%%%%%%%%%%%%%%%
\subsection{Surfaces of revolution   with respect to a spacelike axis and a lightlike axis}
%%%%%%%%%%%%%%%%%%%%%%%%%%%%%%%

The second source of examples of  singular maximal surfaces are the surfaces   invariant by a uniparametric group of rotations. A  difference between the Euclidean and the Lorentzian settings is that in $\l^3$ there are three types of surfaces of revolution depending if the rotational axis is spacelike, timelike or lightlike. Section \ref{sec3} is devoted to the surfaces of revolution whose    rotation axis is timelike because this type of surfaces   will   play a special role  in   the solvability of the Dirichlet problem in Section \ref{sec4}. In  this section we investigate the  cases that  the rotation axis is spacelike and lightlike. 
 
We point out that  there is not an {\it a priori} relation between the rotation axis $L$ and the vector $\vec{a}=(0,0,1)$ of equation \eqref{eqL}. This implies that if we apply a rigid motion to prescribe the rotation axis, then the vector $\vec{a}$ does change: see also Remark \ref{re1}.

Firstly we consider the case that the axis is spacelike. 

\begin{proposition}\label{pr-x}
 Let $S$ be a spacelike surface   of $\l^3$  invariant by  the uniparametric group of rotations about a spacelike axis $L$. Suppose that $S$ satisfies equation \eqref{eqL} where   $\vec{a}$ is a timelike vector. Then either $\vec{a}$ is orthogonal to $L$, or $S$ is the hyperbolic plane $\h^2(r)$ being $\vec{a}$   an arbitrary timelike vector.
\end{proposition}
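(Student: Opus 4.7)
The plan is to write equation \eqref{eqL} explicitly on a surface of revolution about $L$ and exploit that the left-hand side is constant along the rotation orbits while the right-hand side is not \emph{a priori}. After a rigid motion of $\l^3$ (which, per Remark \ref{re1}, rewrites $\vec a$ as a new timelike vector but preserves the form of the equation), I may assume $L$ is the $x$-axis; a further rotation about $L$ leaves $S$ invariant and lets me normalize $\vec a=(a_1,0,a_3)$ with $a_3>0$ and $a_1^2<a_3^2$. In this normalization, $\vec a\perp L$ is exactly the condition $a_1=0$. The surface is then parametrized as in \eqref{eqx} by $X(x,\theta)=(x,u(x)\sinh\theta,u(x)\cosh\theta)$, with $u>0$ and the spacelike condition $u'^2<1$.

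A routine computation gives the future-directed unit timelike normal $N=(u',\sinh\theta,\cosh\theta)/W$, where $W=\sqrt{1-u'^2}$, and the mean curvature
$$H=\frac{u''}{W^3}+\frac{1}{Wu},$$
which depends only on $x$; as a sanity check, $u(x)=\sqrt{x^2+r^2}$ yields $H=2/r$, matching $\h^2(r)$. Also $\langle N,\vec a\rangle=(u'a_1-a_3\cosh\theta)/W$ and $\langle p,\vec a\rangle=a_1x-a_3u\cosh\theta$, so after clearing the factor $W$, equation \eqref{eqL} becomes
$$\frac{u''}{W^2}+\frac{1}{u}=\alpha\,\frac{u'a_1-a_3\cosh\theta}{a_1 x-a_3 u\cosh\theta}.$$

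The left-hand side is $\theta$-independent, so the right-hand side must be too. A single $\partial/\partial\theta$ of the right-hand side produces, after simplification, a numerator proportional to $a_1 a_3(uu'-x)\sinh\theta$; since $a_3>0$, vanishing for every $\theta$ forces the dichotomy $a_1=0$ (that is, $\vec a\perp L$) or $uu'=x$. Integrating the latter gives $u^2=x^2+r^2$, and the spacelike condition $u'^2=x^2/u^2<1$ forces $r^2>0$, so $u(x)=\sqrt{x^2+r^2}$ and $S=\h^2(r)$. That $\h^2(r)$ satisfies \eqref{eqL} with $\alpha=2$ for \emph{every} timelike $\vec a$ is then immediate by direct substitution, and was already observed after the definition of $\alpha$-singular maximal surface. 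The only real obstacle is the bookkeeping around the initial normalization, because rigid motions do not preserve \eqref{eqL}; once one tracks how $\vec a$ is renamed, the conclusion reduces to a one-line $\theta$-derivative.
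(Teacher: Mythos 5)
Your proof is correct and follows essentially the same route as the paper: parametrize the rotational surface by \eqref{eqx}, observe that the $\theta$-dependence of \eqref{eqL} must cancel, and extract the alternative $a_1(uu'-x)=0$, which yields either $\vec a\perp L$ or $u=\sqrt{x^2+r^2}$ (hence $\h^2(r)$). The only cosmetic differences are your preliminary boost about $L$ killing the $y$-component of $\vec a$ and your use of a single $\partial_\theta$ in place of the paper's expansion into coefficients of $\{1,\sinh\theta,\cosh\theta\}$.
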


\begin{proof} After a rigid motion of $\l^3$ we assume that $L$ is the $x$-axis. This rigid motion changes the vector $\vec{a}$ in equation \eqref{eqL} and $\vec{a}$ must be considered an arbitrary (timelike) vector. Let $\vec{a}=(a,b,c)$   denote the new vector $\vec{a}$ in \eqref{eqL} after the rigid motion. Since $\vec{a}$ is timelike, then $c\not=0$. 

Using  the expression of a parametrization \eqref{eqx} of $S$ and after some computations, equation  \eqref{eqL} is a polynomial equation on  $\{1,\sinh\theta,\cos\theta\}$. Since these functions are   linearly independent, all three  coefficients  (which are functions on the  variable $s$) must vanish, obtaining 
$$-c(1-u'^2+uu'')+\alpha c(1-u'^2)=0$$
$$-b(1-u'^2+uu'')+\alpha b(1-u'^2)=0$$
$$as(1-u'^2+uu'')-\alpha auu'(1-u'^2)=0.$$
Since $c\not=0$,   we find  $a(uu'-s)=b(uu'-s)=0$. If $uu'-s\not=0$, then $a=b=0$, proving that $\vec{a}=(0,0,c)$, hence $L$ is orthogonal to the $x$-axis and the result is proved. The other possibility is $uu'-s=0$. Solving this equation, we find $u(s)=\sqrt{s^2+r^2}$, $r>0$. Then  $X(s,\theta)=(s,\sqrt{s^2+r^2}\sinh\theta,\sqrt{s^2+r^2}\cos\theta)$ and it is immediate that this surface is the hyperbolic plane $\h^2(r)$. 
\end{proof}

As a consequence of Proposition \ref{pr-x},   and besides the hyperbolic plane as a special case, we can assume  that $\vec{a}=(0,0,1)$ in the singular maximal surface equation \eqref{eqL},          and that the rotation axis is the $x$-axis.  In such a case, the proof of Proposition \ref{pr-x} gives immediately that equation \eqref{eqL} is
$$\frac{u''}{1-u'^2}=(\alpha-1)\frac{1}{u}.$$
This equation  is just the equation \eqref{eq-one}. Identifying the Lorentzian plane $\l^2$ with the plane of equation $y=0$, we have obtained the following result. 

\begin{proposition}\label{pr27} Any rotational $\alpha$-singular maximal surface in $\l^3$  about the $x$-axis is generated by a planar curve in $\l^2$ that  satisfies the one-dimensional $(\alpha-1)$-singular maximal surface equation. Conversely, any planar curve in $\l^2$ that satisfies equation \eqref{eq-one} is the generating curve of   an $(\alpha+1)$-singular maximal surface invariant by all rotations about the $x$-axis.
\end{proposition}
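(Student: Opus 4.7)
The plan is to extract the statement directly from the computation performed inside the proof of Proposition \ref{pr-x}, specialized to the present configuration in which no rigid motion is needed. I parametrize the rotational surface as in \eqref{eqx},
\[
X(s,\theta)=(s,\,u(s)\sinh\theta,\,u(s)\cosh\theta),
\]
with $\vec{a}=(0,0,1)$ (so in the notation of the proof of Proposition \ref{pr-x}, the ambient vector has components $a=b=0$, $c=1$). Of the three coefficient equations obtained there by decomposing \eqref{eqL} against the linearly independent functions $\{1,\sinh\theta,\cosh\theta\}$, the second and third vanish trivially and only the first survives, namely
\[
-(1-u'^2+uu'')+\alpha(1-u'^2)=0,
\]
which rearranges to
\[
\frac{u''}{1-u'^2}=\frac{\alpha-1}{u}.
\]
This is precisely equation \eqref{eq-one} with constant $\alpha-1$, so the generating curve $z=u(x)$, viewed as a planar curve in the plane $y=0\simeq\l^2$, satisfies the one-dimensional $(\alpha-1)$-singular maximal surface equation. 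This proves the direct implication.

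For the converse, I reverse the logic: given a spacelike planar curve $z=u(x)$ in $\l^2$ satisfying \eqref{eq-one} for some constant $\alpha$ (with $u>0$ and $u'^2<1$), form the rotational surface $X(s,\theta)=(s,\,u(s)\sinh\theta,\,u(s)\cosh\theta)$. The condition $u'^2<1$ guarantees that the surface is spacelike, and $u>0$ places it in the halfspace $z>0$. The same polynomial identity used above, read from right to left, shows that $H$ and $-\langle N,\vec{a}\rangle/z$ satisfy \eqref{eqL} with constant $\alpha+1$, so the surface is an $(\alpha+1)$-singular maximal surface invariant by rotations about the $x$-axis.

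I do not expect any genuine obstacle here: all the analytic content is the computation of $H$ and $\langle N,\vec{a}\rangle$ for the parametrization \eqref{eqx}, which has already been carried out in Proposition \ref{pr-x}. The only point demanding attention is the index shift $\alpha\leftrightarrow\alpha\pm 1$ between the surface and the curve equations, which accounts for the asymmetric way the two implications are phrased in the statement.
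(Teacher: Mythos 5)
Your proof is correct and is essentially the paper's own argument: the author likewise specializes the coefficient equations from the proof of Proposition \ref{pr-x} to $\vec{a}=(0,0,1)$, obtaining $u''/(1-u'^2)=(\alpha-1)/u$, and reads the equivalence in both directions to get the shift $\alpha\leftrightarrow\alpha\pm1$. Nothing further is needed.
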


\begin{example}  \normalfont We know that the solution of \eqref{eq-one} for $\alpha=1$ is the hyperbola $u(x)=\sqrt{1+a^2x^2}/a$, $a>0$. As a consequence of Proposition \ref{pr27}, the only $2$-singular maximal surface that is invariant by the rotations about the $x$-axis is the surface   $x^2+y^2-z^2=-1/a^2$, $z>0$. This surface is the hyperbolic plane $\mathbb{H}^2(1/a)$. Another solution of \eqref{eq-one}   appeared in the Introduction   for $\alpha=-1$. Then the surface generated   is the     hyperbolic catenoid of $\l^3$. 
\end{example}

We finish this section considering   singular maximal surfaces of revolution about a   lightlike axis. Again, we have in mind that if we fix the rotation axis, then the vector $\vec{a}$ in equation \eqref{eqL} is arbitrary. If the rotation axis is determined by the  vector $(1,0,1)$,   the parametrization of the surface is
\begin{equation}\label{eq-li}
X(s,t)=\left(\begin{array}{ccc}1-\frac{t^2}{2}&t&\frac{t^2}{2}\\ -t&1&t\\ -\frac{t^2}{2}&t&1+\frac{t^2}{2}\end{array}\right)\left(\begin{array}{c} u(s)+s\\ 0\\ u(s)-s\end{array}\right),\quad  t\in\r,
\end{equation}
for some function $u=u(s)$, $s\in I\subset\r$.   The spacelike condition on the surface is equivalent to $u'>0$.

\begin{proposition} Let $S$ be a spacelike surface of $\l^3$   invariant by the uniparametric group of rotations about  a   lightlike axis $L$. Suppose $S$ satisfies equation \eqref{eqL} where $\vec{a}$ is a timelike vector. Then  either $\vec{a}$ is orthogonal to  $L$, or $S$ is the hyperbolic plane $\h^2(r)$ being $\vec{a}$ is an arbitrary vector. More precisely, if $L$ is generated by the vector $(1,0,1)$,   $S$ is  parametrized by \eqref{eq-li} and if $\alpha\not=2$, then $\vec{a}=(1,b,1)$, $b\not=0$, and  we have the following possibilities:
\begin{enumerate}
\item If $\alpha=3/2$, then $u(s)=m\log(s)$, $m>0$. 
\item If $\alpha\not= 3/2$, then $u(s)=m s^{3-2\alpha}/(3-2\alpha)$, $m>0$. 
\end{enumerate}
In particular, hyperbolic planes $\h^2(r)$ are   the only  $\alpha$-singular maximal surfaces in $\l^3$ satisfying \eqref{eqL} with $\vec{a}=(0,0,1)$ and  invariant by the group of rotations about  the   lightlike axis generated by the vector $(1,0,1)$.  
\end{proposition}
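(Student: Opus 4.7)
The plan is to follow the strategy of Proposition~\ref{pr-x}: treating $\vec{a} = (a,b,c)$ as an initially arbitrary vector, I parametrize $S$ by \eqref{eq-li} and translate the singular maximal surface equation \eqref{eqL} into a polynomial identity in the rotation parameter $t$, whose coefficients (as functions of $s$) must vanish separately.

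To set this up, I compute $X_s = R(t)(u'+1,0,u'-1)^T$ and $X_t = R'(t)(u+s,0,u-s)^T$, where $R(t)$ is the degree-two lightlike rotation matrix in \eqref{eq-li}. Since $R(t)$ is a Lorentzian isometry, the first fundamental form simplifies to $E = 4u'$, $F = 0$, $G = 4s^2$, confirming $u' > 0$ as the spacelike condition. A direct computation gives $z(s,t) = u(s) - s(1 + t^2)$, and the vector $X_s \times X_t$ is a low-degree polynomial in $t$; hence the unit timelike normal $N$ and the mean curvature $H$ have polynomial numerators in $t$ once the common $s$-dependent normalization is cleared. Substituting into \eqref{eqL} and multiplying through by $z$ and the denominator factors produces a polynomial identity $P(s,t) \equiv 0$ in $t$, whose coefficients involve $u, u', u''$ and $a, b, c$.

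Equating each coefficient of $P$ to zero, I expect the highest-order term in $t$ to be proportional to $a - c = \langle (1,0,1),\vec{a}\rangle$, which is exactly the Lorentzian obstruction for $\vec{a}\perp L$. When $a \neq c$, this constraint should force an algebraic relation of the form $u(s)\,s = \text{const}$; since $\langle X,X\rangle = \langle(u+s,0,u-s),(u+s,0,u-s)\rangle = 4us$, the resulting surface is precisely $\h^2(r)$. Because a timelike vector can never be orthogonal to a lightlike line, whenever $\vec{a}$ is timelike we fall in this case, which proves the first assertion and the ``in particular'' part with $\vec{a} = (0,0,1)$. In the opposite case $a = c$, rescaling allows us to set $a = c = 1$, and the possibility $b = 0$ is excluded since it would make $\vec{a}$ parallel to $L$; the remaining coefficients of $P$ should collapse to a single separable ODE, of the shape $u'(s) = m\, s^{2-2\alpha}$, whose integration yields $u(s) = m\log s$ when $\alpha = 3/2$ and $u(s) = m s^{3-2\alpha}/(3-2\alpha)$ otherwise. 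The value $\alpha = 2$ must be excluded from this branch because $\h^2(r)$ satisfies \eqref{eqL} for \emph{every} choice of $\vec{a}$ when $\alpha = 2$, which makes the dichotomy degenerate. The main obstacle will be the bookkeeping needed to organize the polynomial expansion cleanly enough to read off which coefficient yields the algebraic relation identifying $\h^2(r)$ and which yields the ODE determining $u$.
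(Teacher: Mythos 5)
Your setup is the paper's: parametrize $S$ by \eqref{eq-li}, expand \eqref{eqL} as a quadratic polynomial in $t$ and annihilate its coefficients, and your preliminary computations ($E=4u'$, $F=0$, $G=4s^2$, $z=u-s(1+t^2)$, $\langle X,X\rangle=4us$) are all correct. The flaw is in the predicted case split. In the actual expansion the $t^2$ and $t^1$ coefficients are $(a-c)\bigl(su''-2(1-\alpha)u'\bigr)$ and $b\bigl(su''-2(1-\alpha)u'\bigr)$, so when $a\neq c$ the leading coefficient does \emph{not} produce the algebraic relation $us=\mathrm{const}$; it produces the linear ODE $su''=2(1-\alpha)u'$ (equivalently your $u'=ms^{2-2\alpha}$), which is exactly the equation you assigned to the branch $a=c$. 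In fact that ODE is forced in \emph{both} branches, since otherwise one would need $a=c$ and $b=0$ simultaneously, making $\vec a$ lightlike. The dichotomy only appears afterwards: substituting the solution $u=ms^{3-2\alpha}/(3-2\alpha)$ (or $m\log s$ for $\alpha=3/2$) into the $t^0$ coefficient leaves a relation carrying a factor $(a-c)(2-\alpha)$, so either $a=c$ (and then $\vec a=(a,b,a)$ is spacelike, never timelike), or $\alpha=2$, in which case $u=-m/s$, i.e.\ $us=\mathrm{const}$ and $S=\h^2(2\sqrt{m})$ with \eqref{eqL} holding for every $\vec a$. Consequently your inference ``$\vec a$ timelike $\Rightarrow a\neq c\Rightarrow S=\h^2(r)$'' is too strong: $\h^2(r)$ satisfies \eqref{eqL} only when $\alpha=2$, and for timelike $\vec a$ with $\alpha\neq2$ the coefficient equations are incompatible, so there is no invariant solution at all (which is still consistent with, and is how the paper actually establishes, the ``in particular'' claim). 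The bookkeeping you defer is therefore not a formality; it is where the $\alpha=2$ threshold and the correct ordering of the two constraints come from.
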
 

\begin{proof}
A straightforward computation of equation   \eqref{eqL} for the surface \eqref{eq-li}  concludes that this equation is a polynomial equation on $t$ of degree $2$. Thus the coefficients corresponding for the variable $t$ must vanish, obtaining
$$2 u' \left((\alpha +1) s (a+c)+(a-c) \left(u+\alpha  s u'\right)\right)-s u'' ((a-c) u+s (a+c))=0$$
$$b  \left(s u''-2 (1-\alpha) u'\right)=0$$
$$ (a-c)\left(s u''-2 (1-\alpha) u'\right)=0.$$
From the second and third equation, if $su''-2(1-\alpha)u'\not=0$, we have $b=0$ and $a=c$, obtaining that $\vec{a}$ is a lightlike vector, which is not possible. Thus $s u''-2 (1-\alpha ) u'=0$. The solution of this equation depends on the value of $\alpha$. 
\begin{enumerate}
\item Case  $\alpha=3/2$. Then $u(s)=m\log(s)$ with $m>0$. The first equation yields $(a-c)m^2(1+\log(s)=0$, that is, $a=c$ and $\vec{a}=(a,b,a)$, $b\not=0$.
\item Case $\alpha\not=3/2$. Then $u(s)=ms^{3-2\alpha}/(3-2\alpha)$ with $m>0$. Now the first equation simplifies into 
$(a-c)(2-\alpha)s^{5-4\alpha}=0$. If $\alpha=2$, then $u(s)=-m/s$ and it is not difficult to see that this surface is   the hyperbolic plane $\mathbb{H}^2(2\sqrt{m})$. If $\alpha\not=2$, then $a=c$, so $\vec{a}=(a,b,a)$, $b\not=0$. 

\end{enumerate}

\end{proof}

 %%%%%%%%%%%%%%%%%%%%%%%%%%%%%%%%
 \section{Surfaces of revolution about the $z$-axis}\label{sec3}
 %%%%%%%%%%%%%%%%%%

In this section we study the   surfaces of revolution with timelike axis  $L$. Again, the same observations done in the previous section hold in the sense that   there is not an {\it a priori} relation between the vector $\vec{a}$ and the axis $L$. The first result that we will prove is that, indeed,  $L$ must parallel to the vector $\vec{a}$.

\begin{proposition}\label{pr-zz}
 Let $S$ be an $\alpha$-singular maximal surface in $\l^3$ that is invariant by the uniparametric group of rotations about a timelike axis $L$. Suppose that $S$ satisfies equation \eqref{eqL} where  $\vec{a}$ is now an arbitrary timelike vector. Then either $L$ and $\vec{a}$ are parallel, or $S$ is the hyperbolic plane $\h^2(m)$ being $\vec{a}$ is an arbitrary timelike vector.
\end{proposition}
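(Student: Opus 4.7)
The plan is to mimic the argument used in Proposition \ref{pr-x} and the lightlike axis case, exploiting that rotational symmetry reduces \eqref{eqL} to a polynomial identity in trigonometric functions. First I would apply a rigid motion of $\l^3$ to place the timelike axis $L$ along the $z$-axis; by Remark \ref{re1}, this transports $\vec{a}$ to a new, arbitrary timelike vector, say $\vec{a}=(a,b,c)$ with $c^2>a^2+b^2$, and in particular $c\neq 0$. Since $S$ is a spacelike surface of revolution about a timelike axis, locally it is a graph $z=u(r)$ over $r=\sqrt{x^2+y^2}$, so I parametrize $S$ by $X(r,\theta)=(r\cos\theta,r\sin\theta,u(r))$ with $u'^2<1$ (spacelike condition).

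Next I would compute both sides of \eqref{eqL} in this parametrization. The upward unit normal is $N=(u'\cos\theta,u'\sin\theta,1)/\sqrt{1-u'^2}$, so
\[
\langle N,\vec{a}\rangle=\frac{au'\cos\theta+bu'\sin\theta-c}{\sqrt{1-u'^2}},\qquad \langle X,\vec{a}\rangle=ar\cos\theta+br\sin\theta-cu(r),
\]
while the mean curvature of the radial graph is $H=(1-u'^2)^{-1/2}F(r)$ with $F(r):=\frac{u''}{1-u'^2}+\frac{u'}{r}$. Clearing denominators in \eqref{eqL} yields
\[
F(r)\bigl(ar\cos\theta+br\sin\theta-cu\bigr)=\alpha\bigl(au'\cos\theta+bu'\sin\theta-c\bigr),
\]
which must hold for every $\theta$. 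Linear independence of $\{1,\cos\theta,\sin\theta\}$ (viewed as functions of $\theta$, with coefficients depending on $r$) forces the three coefficients to vanish, giving
\[
a\bigl(rF(r)-\alpha u'\bigr)=0,\qquad b\bigl(rF(r)-\alpha u'\bigr)=0,\qquad c\bigl(uF(r)-\alpha\bigr)=0.
\]

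Since $c\neq 0$ the third identity gives $uF=\alpha$. If $a=b=0$ then $\vec{a}=(0,0,c)$ is parallel to $L$ and we are done. Otherwise $rF=\alpha u'$, and combined with $uF=\alpha$ this yields $uu'=r$ (using $\alpha\neq 0$), whose integration gives $u^2-r^2=\text{const}>0$, i.e. $x^2+y^2-z^2=-m^2$. This is precisely the hyperbolic plane $\h^2(m)$, which by Proposition \ref{pr-x} (and the remark after the definition) satisfies \eqref{eqL} for any timelike $\vec{a}$. The main obstacle is purely computational, namely writing $H$ and $\langle N,\vec{a}\rangle$ in a form that cleanly separates the $\theta$-dependent trigonometric polynomial; once the three coefficient equations are isolated, the dichotomy $a=b=0$ versus ``$S=\h^2(m)$'' falls out immediately.
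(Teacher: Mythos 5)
Your proposal is correct and follows essentially the same route as the paper: normalize $L$ to the $z$-axis, parametrize $S$ as a radial graph, expand \eqref{eqL} as a trigonometric polynomial in $\theta$, and read off the three coefficient equations to obtain the dichotomy $a=b=0$ versus $uu'=r$ (hence $\h^2(m)$). Your explicit coefficient identities are written in a slightly different (and self-consistent) form than the paper's displayed ones, but they lead to exactly the same conclusion, with the spacelike condition $u'^2<1$ forcing the integration constant in $u^2-r^2=\mathrm{const}$ to be positive.
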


\begin{proof} After a rigid motion, we suppose that the rotation axis is the $z$-axis. Let  $\vec{a}=(a,b,c)$  after this motion. The surface $S$ parametrizes  as $X(r,\theta)=(r\cos\theta,r\sin\theta,u(r))$, $r\in I\subset\r^+$, $\theta\in\r$, $u>0$ and $u'^2<1$. The computation of equation \eqref{eqL} gives 
a polynomial equation on  the trigonometric functions $\{1,\sin\theta,\cos\theta\}$. Thus all three coefficients must vanish, obtaining
$$a  \left(r u''+(\alpha +1) u'(1-u'^2)\right)=0$$
 $$b \left(r u''+(\alpha +1) u'(1-u'^2)\right)=0$$
$$c \left(\alpha  r \left(1-u'^2\right)+u \left(r u''+u'(1-u'^2)\right)\right)=0.$$
If $r u''-(\alpha +1) u'(1-u'^2)\not=0$, then $a=b=0$, proving that $\vec{a}=(0,0,c)$, hence $L$ and $\vec{a}$ are parallel.

Suppose now that $r u''+(\alpha +1) u'(1-u'^2)=0$.  Recall that $c\not=0$ because $\vec{a}$ is a timelike vector. Combining with the third equation, we find $uu'-r=0$. Solving this equation we obtain 
$u(r)=\sqrt{r^2+m^2}$, $m>0$, and the corresponding surface   is the hyperbolic plane $\h^2(m)$.  
\end{proof}

By Proposition \ref{pr-zz}, and after a horizontal translation, we will assume that the rotation axis is the $z$-axis  and     $\vec{a}=(0,0,1)$ in \eqref{eqL}. We know that   $X(r,\theta)=(r\cos\theta,r\sin\theta,u(r))$, where $r\in I\subset \r^+$, $\theta\in\r$ and $u>0$. By the proof of     Proposition \ref{pr-zz},   equation (\ref{eqL}) writes as  
 \begin{equation}\label{eq3}
 \frac{u''}{(1-u'^2)^{3/2}}+\frac{u'}{r\sqrt{1-u'^2}}=\frac{\alpha}{u\sqrt{1-u'^2}},
 \end{equation}
 or equivalently,
 \begin{equation}\label{eq33}
  \frac{u''}{ 1-u'^2 }+\frac{u'}{r }=\frac{\alpha}{u}.
 \end{equation}
 We are interested in those solutions that meet the $z$-axis, that is, when $r=0$ is contained in the domain of the solution. Let us observe that    equation     (\ref{eq3}) is singular at $r=0$ and thus the existence of solutions   is not a direct consequence of standard ODE theory. 
 
  Multiplying   (\ref{eq3}) by $r$, and integration by parts, we wish to establish the existence of a classical solution of
\begin{equation}\label{rot}
\left\{\begin{array}{ll}
  \left(r\dfrac{u'}{\sqrt{1-u'^2}}\right)'=r\dfrac{\alpha}{u\sqrt{1-u'^2}}&\mbox{   $r\in (0,\delta)$}\\
u(0)=u_0>0, \quad u'(0)=0.&
\end{array}\right.
\end{equation}

 Define the functions $\phi:(-1,1)\rightarrow\r$ and $f:\r^+\times(-1,1)\rightarrow\r$ by
 $$\phi(y)=\frac{y}{\sqrt{1-y^2}}\quad f(x,y)=\frac{\alpha}{x\sqrt{1-y^2}}.$$
 
Let $\delta>0$. It is clear that a function $u\in C^2([0,\delta])$  is a solution of (\ref{rot}) if and only if   $(r\phi(u'))'=r f(u,u')$ and $u(0)=u_0$, $u'(0)=0$. Let  $\mathcal{B}=(C^1([0,\delta]),\|\cdot\|)$ be the Banach space  of the continuously differentiable functions on $[0,\delta]$ endowed with the  usual norm 
 $$\|u\|=\|u\|_{\infty}+\|u'\|_{\infty}.$$
Define the operator ${\mathsf T}:\mathcal{B}\rightarrow \mathcal{B}$ by
$$({\mathsf T}u)(r)=u_0+\int_0^r\phi^{-1}\left(\int_0^s\frac{t}{s} f(u,u') dt\right)ds.$$
Notice that a fixed point of the operator ${\mathsf T}$ is a solution of the initial value problem (\ref{rot}). Indeed, $({\mathsf T}u)'=\phi^{-1}\left(\frac{1}{r}\int_0^r t f(u,u')dt\right)$ and 
$$r\phi({\mathsf T} u')\int_0^r t f(u,u')dt,$$
 obtaining the result. Moreover, ${\mathsf T}u(0)=u_0$ and 
 $$\phi({\mathsf T}u)'(0)=\lim_{r\rightarrow 0}\frac{1}{r}\int_0^r t f(u,u')dt=\lim_{r\rightarrow 0} r f(u,u')=0,$$
 where in the second identity we have used  the L'H\^{o}pital rule. Thus, $({\mathsf T}u)'(0)=0$.

The existence of solutions of \eqref{rot} follows  now standard techniques of radial solutions for some   equations of mean curvature type (\cite{be,cco}). In Figure \ref{fig2} we show the solutions of \eqref{rot} when $\alpha$ is positive and negative.
 
 \begin{figure}[hbtp]
\centering \includegraphics[width=.4\textwidth]{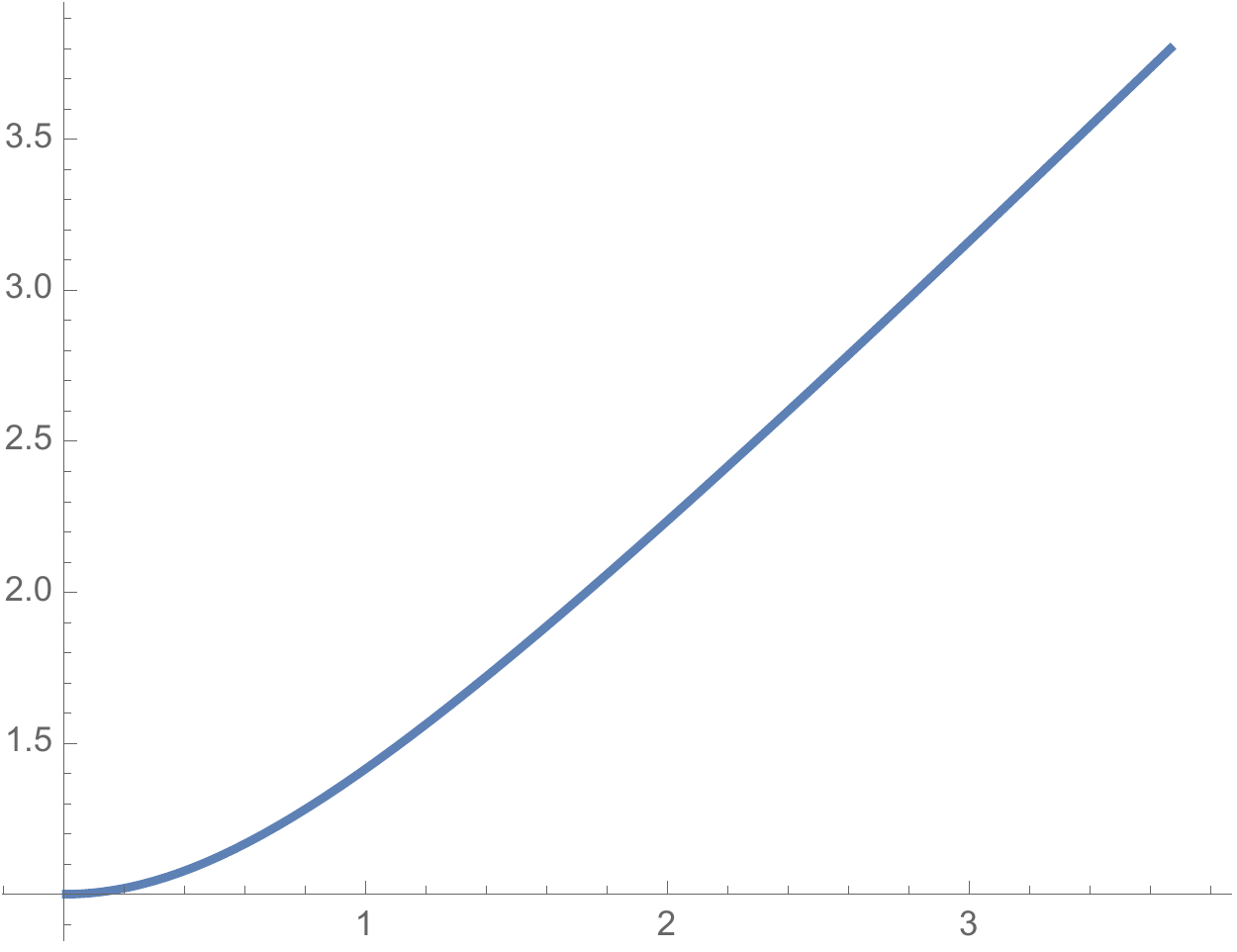}\quad \includegraphics[width=.4\textwidth]{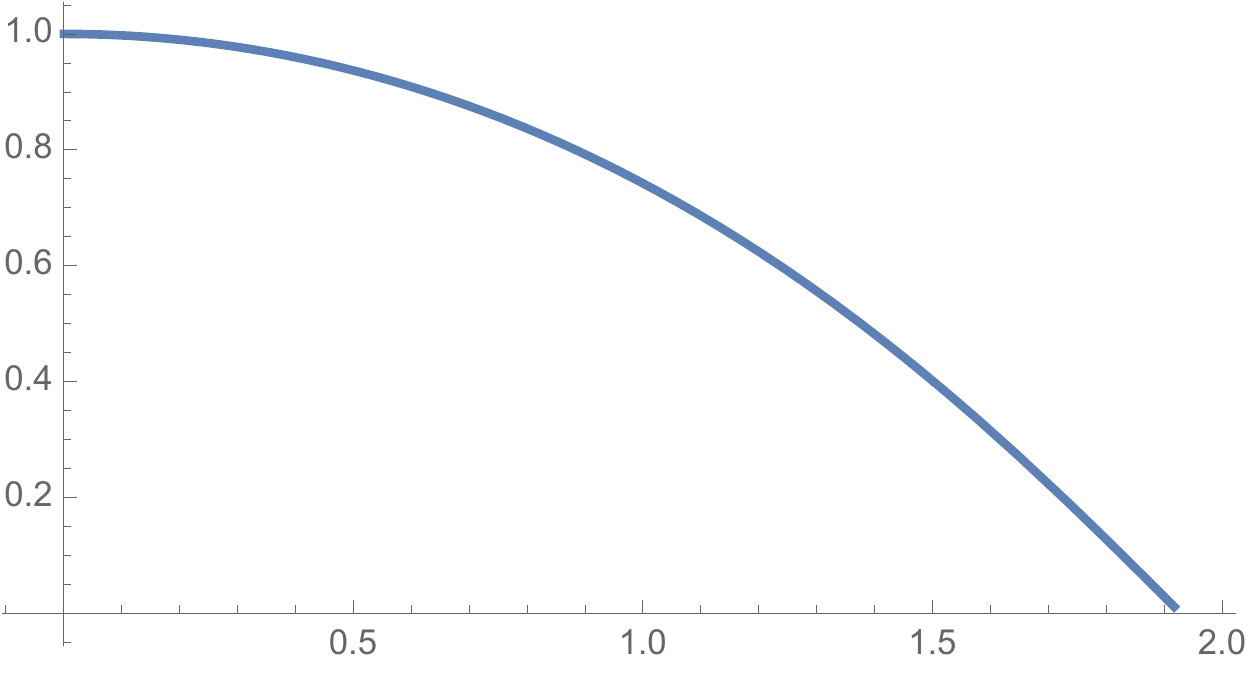}
 \caption{Solutions of \eqref{rot}. Left: case $\alpha>0$, here $\alpha=2$. Right: case $\alpha<0$, here $\alpha=-1$ }\label{fig2}
 \end{figure}

\begin{theorem}\label{pr-exi}
The initial value problem (\ref{rot})  has a solution $u\in C^2([0,\delta])$ for some $\delta>0$ that depends continuously on the initial data.
\end{theorem}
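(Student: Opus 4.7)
The plan is to apply the Banach fixed-point theorem to the operator ${\mathsf T}$ defined immediately before the statement, on a small closed ball of $\mathcal{B}=C^1([0,\delta])$ tailored to keep $f(u,u')$ and $\phi^{-1}$ uniformly away from their singularities. Fix $R\in(0,u_0/2)$ and set
$$K_R=\{u\in\mathcal{B}:u(0)=u_0,\ \|u-u_0\|_{\infty}\leq R,\ \|u'\|_{\infty}\leq R\}.$$
For $u\in K_R$ one has $u(r)\geq u_0/2>0$ and $|u'(r)|\leq R<1$, so $f(u,u')$ is well defined and bounded by some $M=M(u_0,R,\alpha)$, while $\phi^{-1}$ together with $f$ are Lipschitz on the relevant compact region with constants $L_{\phi},L_f$ depending only on $u_0,R,\alpha$.

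First I would check the self-map property ${\mathsf T}(K_R)\subset K_R$ for $\delta$ small. Using the trivial bound $|\phi^{-1}(w)|<1$ gives $|({\mathsf T}u)(r)-u_0|\leq r\leq\delta$. For the derivative, the factor $t$ in the integrand absorbs the apparent singularity at $s=0$, since $\bigl|\frac{1}{s}\int_0^s tf(u,u')\,dt\bigr|\leq Ms/2$, and then $|\phi^{-1}(w)|\leq|w|$ yields $|({\mathsf T}u)'(r)|\leq M\delta/2$. Both bounds are $\leq R$ once $\delta$ is chosen small enough, and the boundary values $({\mathsf T}u)(0)=u_0$, $({\mathsf T}u)'(0)=0$ are already verified in the text via L'H\^opital.

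Next I would show ${\mathsf T}$ is a contraction on $K_R$. Propagating $L_{\phi}$ and $L_f$ through the integral formula for ${\mathsf T}$ and reusing the $t$-averaging identity produces estimates of the form
$$\|{\mathsf T}u-{\mathsf T}v\|\leq C\delta\,\|u-v\|\qquad(u,v\in K_R),$$
with $C=C(u_0,R,\alpha)$. Shrinking $\delta$ further makes $C\delta<1$, so Banach's theorem yields a unique fixed point $u\in K_R$. That $u$ belongs to $C^2([0,\delta])$ and solves (\ref{rot}) pointwise comes from the identity $r\phi(u')=\int_0^r tf(u,u')\,dt$: the right-hand side is $C^1$, and differentiating it gives back the ODE on $(0,\delta)$, while at $r=0$ one evaluates $u''(0)=\alpha/(2u_0)$ by L'H\^opital.

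Continuous dependence on the initial datum $u_0$ then comes for free from the parametric version of the contraction principle: on a small neighborhood of any prescribed $u_0>0$ the constants $M,L_f,L_{\phi}$ and hence the admissible $R$ and $\delta$ can be chosen uniformly, the operators ${\mathsf T}_{u_0}$ depend continuously on $u_0$, and so their fixed points vary continuously with $u_0$. The main obstacle is coordinating the self-map and the contraction estimate on the \emph{same} $K_R$ and $\delta$; the essential trick is the weight $t$ in the inner integral, which turns the singular coefficient $1/s$ in (\ref{eq3}) into harmless linear factors of $\delta$ in every estimate, exactly the mechanism that underlies the radial ODE techniques for mean-curvature-type equations referenced just before the statement.
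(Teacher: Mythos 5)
Your proposal follows essentially the same route as the paper: the same fixed-point operator ${\mathsf T}$, the same Lipschitz/contraction estimates exploiting the weight $t$ in the inner integral, the same L'H\^{o}pital argument giving $u''(0)=\alpha/(2u_0)$ for $C^2$ regularity at $r=0$, and the same appeal to continuous dependence of fixed points of a parametric contraction. If anything you are slightly more careful than the paper, since you verify explicitly that ${\mathsf T}$ maps the ball $K_R$ into itself (keeping $u$ bounded below and $|u'|$ away from $1$) before invoking the contraction principle.
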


\begin{proof}

In order to find a fixed point of ${\mathsf T}$, we prove that   ${\mathsf T}$ is a contraction in $\mathcal{B}$ for some $\delta>0$ to be chosen. The functions $f$ and  $\phi^{-1}$ are  locally Lipschitz continuous of constant $L>1$ in
$[u_0-\epsilon,u_0+\epsilon]\times[-\epsilon,\epsilon]$ and $[-\epsilon,\epsilon]$ respectively, provided $\epsilon<\{u_0,1\}$. Since  $\phi^{-1}(y)=y/\sqrt{1+y^2}$, then $L<1$.  Then for all $u,v\in\overline{B(0,\epsilon)}$ and for all $r\in [0,\delta]$,
\begin{eqnarray*}
|({\mathsf T}u)(r)-({\mathsf T}v)(r)|&\leq& L\int_0^r\left|\int_0^s\frac{t}{s}(f(u,u')-f(v,v'))dt\right|\\
&\leq& L^2\int_0^r\int_0^s\frac{t}{s}\|u-v\| dt=
\frac{L^2}{4} r^2  \|u-v\|.
\end{eqnarray*}
\begin{eqnarray*}
|({\mathsf T}u)'(r)-({\mathsf T}v)'(r)|&\leq&\frac{L}{r}\left|\int_0^r t(f(u,u')-f(v,v'))dt\right|\\
&\leq&\frac{L^2}{r}\int_0^r t\|u-v\|dt=\frac{L^2}{2}r\|u-v\|.
\end{eqnarray*}
By choosing $\delta>0$ small enough, we deduce that ${\mathsf T}$ is a contraction in the closed ball $\overline{B(0,\delta)}\subset \mathcal{B}$. Thus the Schauder Point Fixed Theorem proves the existence of one fixed point of $\mathsf{T}$, so  the existence of  a local solution of the initial value problem  (\ref{rot}). This solution belongs to $C^1([0,\delta])\cap C^2((0,\delta])$. The $C^2$-regularity up to $0$ is verified directly by  using the L'H\^{o}pital rule because (\ref{eq3})  leads to
     $$\lim_{r\rightarrow 0}u''(r)+\lim_{r\rightarrow 0}\frac{u'(r)}{r}=\frac{\alpha}{u_0},$$
that is,
\begin{equation}\label{uu}
\lim_{r\rightarrow 0} u''(r)=\frac{\alpha}{2u_0}.
\end{equation}
The continuous dependence of local solutions on the initial data  is a consequence of the continuous dependence of the fixed points of ${\mathsf T}$.
 \end{proof}

In the following result we describe the geometric properties of the rotational solutions of \eqref{eq33}. See figures \ref{fig2}, \ref{fig3} and \ref{fig4}.
 
\begin{theorem}\label{t32}
 Let $u$ be a solution of \eqref{eq3} with $u>0$ and $u'^2<1$.
  \begin{enumerate}
 \item Case $\alpha>0$.  The maximal domain of $u$ is $(0,\infty)$. Let 
 $u_0'=\lim_{r\rightarrow 0}u'(r)$.  Then  we have the following cases: 
  $u_0'=0$ and the function $u$ is increasing; $u_0'=-1$ and   $u$ has a unique critical point which is a global minimum; $u_0=1$ and the function is increasing.  In all cases, 
\begin{equation}\label{t32-1}
\lim_{r\rightarrow\infty}u(r)=\infty.
\end{equation}
Also, the function  $u(r)=\sqrt{\alpha} r$ is a solution of  (\ref{eq3}).

 \item Case $\alpha<0$.  The maximal domain of $u$ is $(a,b)$ with  $0\leq a<b<\infty$ and 
 $$\lim_{r\rightarrow b}u(r)=0,\quad \lim_{r\rightarrow b}u'(r)=-1.$$
If $a>0$, then $u$ has a global maximum and 
$$\lim_{r\rightarrow a}u(r)=0,\quad\lim_{r\rightarrow a}u'(r)=1.$$
If $a=0$, let $u_0'=\lim_{r\rightarrow 0}u'(r)$. Then we have the following cases:   $u_0'=0$ and $u$ is a decreasing function;  $u_0'=-1$ and    $u$ is a decreasing function; $u_0'=1$ and $u$ has a   global maximum. 
\end{enumerate}

\end{theorem}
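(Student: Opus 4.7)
The plan is to derive a monotone first integral and then perform a careful phase-plane analysis in each case. A direct computation using \eqref{eq33} gives
$$\frac{d}{dr}\bigl[u^{2\alpha}(1-u'^2)\bigr]=\frac{2\,u^{2\alpha}u'^2(1-u'^2)}{r}\geq 0,$$
so the quantity $M(r)=u(r)^{2\alpha}(1-u'(r)^2)$ is non-decreasing along any solution, and strictly so wherever $u'\neq 0$. Combined with the pointwise identity $u'(r_0)=0\Rightarrow u''(r_0)=\alpha/u(r_0)$, which forces every critical point of $u$ to be a strict minimum when $\alpha>0$ and a strict maximum when $\alpha<0$, this already implies that each solution has at most one critical point on its maximal interval.

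In the case $\alpha>0$ the function $u(r)=\sqrt{\alpha}\,r$ is checked by substitution to solve \eqref{eq3} (so this takes care of the stated explicit example). For the three admissible initial behaviors $u'_0\in\{0,-1,1\}$ I would integrate forward in $r$. The monotonicity of $M$ yields $1-u'(r)^2\geq M(r_0)\,u(r)^{-2\alpha}$ for any base point $r_0>0$, so as long as $u$ stays bounded the spacelike condition is quantitatively preserved. Writing \eqref{eq33} as $u''=(1-u'^2)(\alpha/u-u'/r)$ then shows that $u''$ remains bounded on compact subintervals of $(0,\infty)$ whenever $u$ stays away from $0$, which the critical-point classification guarantees: the only possible minimum value is the initial value $u_0>0$. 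Thus the maximal domain is $(0,\infty)$, and each sub-case yields the monotonicity picture stated. Finally, \eqref{t32-1} is obtained by contradiction: if $u$ were bounded then $u'\to 0$, but \eqref{eq33} would force $u''(r)\to\alpha/u_\infty>0$ for large $r$, incompatible with boundedness.

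For $\alpha<0$ the same monotonicity is used in reverse: because $u^{2\alpha}$ blows up as $u\downarrow 0$, the non-decrease of $M$ forces $1-u'(r)^2\to 0$ as $u\to 0$. Critical points are now strict maxima, at most one, so $u$ is either strictly monotone or exhibits a single interior maximum. An analysis of each sub-case $u'_0\in\{0,-1,1\}$ (and of the symmetric picture when $a>0$) then shows that $u$ must reach zero at the right endpoint $b$ with $u'(b^-)=-1$, and similarly at $a$ when $a>0$; the finiteness of $b$ follows because once $|u'|$ is close to $1$ the function drops to zero in finite $r$. The main obstacle is the behavior at the degenerate endpoints $r=0$ when $u'_0=\pm 1$, where the ODE is singular both in $r$ and in the factor $1-u'^2$ of its principal part, and at $r=b$ where $u\to 0$ in the case $\alpha<0$; these must be obtained as limits of the regular solutions supplied by Theorem~\ref{pr-exi}, with $M$ furnishing the uniform control needed to pass to the limit.
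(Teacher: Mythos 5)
Your first integral is correct and is in fact the same one the paper uses: $\log M=\log(1-u'^2)+2\alpha\log u$ satisfies $(\log M)'=2u'^2/r$, which is exactly the identity obtained in the paper by multiplying \eqref{eq33} by $2u'$ and integrating (Claim A there). Packaged multiplicatively, $M$ gives a clean lower bound $u\ge M(r_0)^{1/(2\alpha)}$ for $\alpha>0$, and it would also dispatch instantly the hardest sub-case for $\alpha<0$ (see below). So the skeleton is right, but as written the proposal has two genuine gaps and a couple of loose steps.

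First gap: you take the trichotomy $u_0'\in\{0,-1,1\}$ at $r=0$ as given ("the three admissible initial behaviors"), but this is part of what must be proved. One has to show that $\lim_{r\to 0}u'(r)$ exists and cannot lie in $(-1,0)\cup(0,1)$; the paper does this (Claim B) by integrating $\bigl(r u'/\sqrt{1-u'^2}\bigr)'=\alpha r/(u\sqrt{1-u'^2})$ from $\delta$ to $r$, letting $r\to 0$, and applying L'H\^opital to force $u_0'=0$ whenever $u_0'^2\neq 1$. Nothing in your $M$-monotonicity yields this, since $M$ is only controlled away from $r=0$. Second gap: for $\alpha<0$ you assert that "an analysis of each sub-case then shows that $u$ must reach zero at the right endpoint $b$," but the delicate case is a solution with $u'>0$ on its entire maximal interval: then $u$ is concave, $b=\infty$, and one must derive a contradiction (the paper needs a two-fold L'H\^opital argument on the integrated equation for this). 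You never address it. Ironically your own tool closes it in one line: if $u\to\infty$ with $\alpha<0$ then $M=u^{2\alpha}(1-u'^2)\to 0$, contradicting $M\ge M(r_0)>0$ and $M$ non-decreasing; but you must say so. Two smaller points: (i) "the non-decrease of $M$ forces $1-u'^2\to 0$ as $u\to 0$" is not a consequence of monotonicity alone — you need $M$ bounded \emph{above} near the endpoint, which follows from $(\log M)'=2u'^2/r\le 2/r$ being integrable up to $b<\infty$ (and at a left endpoint $a>0$ the same bound rules out $u\to 0$ when $\alpha>0$, a case your domain argument skips); (ii) "once $|u'|$ is close to $1$ the function drops to zero in finite $r$" is circular as a proof that $b<\infty$ — the correct argument is that $b=\infty$ with $u$ decreasing would force $u'\to 0$ while \eqref{eq33} keeps $u''$ bounded away from $0$ from below by a negative constant.
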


\begin{proof}We observe that if $u$ has a critical point at   $r_o\geq 0$, then \eqref{eq33} implies $u''(r_o)=\alpha/u(r_o)\not=0$, hence all critical points are all maximum or are all minimum.  Thus there is one critical point at most. In such a case, this point is a global minimum (resp. maximum) if $\alpha>0$ (resp. $\alpha<0$). 

{\it Claim A. If the graphic of $u$ meets the $x$-axis at $r_*>0$, then $\alpha<0$ and $\lim_{r\rightarrow r_*}u'(r)^2=1$.} 

The proof follows by multiplying  \eqref{eq33} by $2u'$ and integrating. Then
$$\log(1-u'(r)^2)+2\alpha\log u(r)=2\alpha\int^r\frac{u'(t)^2}{t}dt+\mu,\quad\mu\in\r.$$
In a neighborhood of $r_*$, the right-hand side of the above equation is finite. Since $\log(u(r))\rightarrow -\infty$ as $r\rightarrow r_*$, the same occurs with $\lim_{r\rightarrow r_*}\log(1-u'(r)^2)$, proving that $u'(r)^2\rightarrow 1$ as $r\rightarrow r_*$. Moreover, the case $\alpha>0$ is not possible because the left-hand side would be $-\infty$. 

{\it Claim B. If the graphic of $u$ meets the $y$-axis, then $\lim_{r\rightarrow 0}u'(r)=0$ or $\lim_{r\rightarrow 0}u'(r)^2=1$.} 

Let denote $u_0'=\lim_{r\rightarrow 0}u'(r)$. From Theorem \ref{pr-exi}, we know  the existence of solutions when $u_0'=0$. Suppose now $u_0'\not=0$. By contradiction, we assume that $u_0'^2\not=1$. For $\delta>0$ close to $0$ and by  \eqref{rot}, 
\begin{equation}\label{delta}
\frac{ru'(r)}{\sqrt{1-u'(r)^2}}-\frac{\delta u'(\delta)}{\sqrt{1-u'(\delta)^2}}=\int_\delta^r\frac{\alpha t}{u\sqrt{1-u'^2}}dt.
\end{equation}
Since $u_0'^2\not=1$, letting $r\rightarrow 0$ we have
$$\frac{u'(\delta)}{\sqrt{1-u'(\delta)^2}}=\frac{1}{\delta}\int_0^\delta\frac{\alpha t}{u\sqrt{1-u'^2}}dt.$$
Letting $\delta\rightarrow 0$ and by the L'H\^{o}pital rule, we deduce
$$\lim_{\delta\rightarrow 0}\frac{u'(\delta)}{\sqrt{1-u'(\delta)^2}}=\lim_{\delta\rightarrow 0}\frac{\alpha\delta}{u\sqrt{1-u'^2}}=0,$$
hence $u_0'=0$, a contradiction. 

In particular, the claim B implies that it is not possible to find   solutions of the initial value problem   \eqref{rot} when $u_0'^2\in (0,1)$.

From now, we will denote by $u(a)$ and $u'(a)$  (similar for $r=b$), the limit of $u(r)$ and $u'(r)$ at $r=a$.

{\it Claim C. If $a>0$ (resp. $b<\infty$), then $u(a)=$ (resp. $u(b)=0$).} 

Suppose that $a>0$ (similarly for $b<\infty$). If $u(a)\not=0$, then $u''$ is bounded around $r=a$ by \eqref{eq33}. Since $u'$ and $u''$ are bounded functions, we could extend the solution $u$ beyond $r=a$, a contradiction.

We are in position to prove the theorem. 

\begin{enumerate}
 
\item Case $\alpha>0$. Suppose that  $u'>0$ in all its domain. Since $u'$ and $u''$ are bounded functions  by \eqref{eq33}, then the value of $b$ in $I$ is $b=\infty$. If $a>0$, this implies that $u(a)=0$ by Claim C and  this a contradiction by   Claim A. This proves that $I=(0,\infty)$.

Suppose that the sign of $u'$ is negative in all its domain.  Then \eqref{eq33} implies that $u$ is a concave function and thus $b<\infty$   because $u$ is decreasing. Then $u(b)=0$, which is not possible by  Claim A.

After the above arguments, we have proved that if $u'$ has a constant sign, then $u'>0$, $a=0$ and either $u_0'=0$ or $u_0'=1$. In case that $u'$ changes of sign, then there is a unique critical point at some point $r=r_o>0$, which is a global minimum. In this case, $u'>0$ for $r>r_o$. Since $u'$ and $u''$ are bounded, then   $b=\infty$. The case $a>0$ is forbidden by Claim C.   Thus $a=0$. Since $u'<0$ for $r<r_o$,   Claim B asserts  $u_0'=-1$.

We prove  \eqref{t32-1}. Since $u$ is increasing  close $\infty$, let  $c=\lim_{r\rightarrow\infty}u(r)$. If $c<\infty$, then $u'(r)\rightarrow 0$ as $r\rightarrow\infty$ and using \eqref{eq33}, $\lim_{r\rightarrow\infty}u''(r)= \alpha/c>0$, a contradiction. Thus $c=\infty$. 

Finally, by a direct computation, we observe that  $u(r)=\sqrt{\alpha}r$ is a solution of  (\ref{eq3}).

\item Case $\alpha<0$. Suppose that $u'<0$ in all its domain. Let $c=\lim_{r\rightarrow b}u(r)\geq 0$. If $b=\infty$, then  $u'(r)\rightarrow 0$ and  \eqref{eq33} would imply that $\lim_{r\rightarrow \infty}u''(r)$ is either $\alpha/c$ if $c>0$ or $\infty$ if $c=0$, a contradiction. Thus $b<\infty$, hence $u(b)=0$. By Claim A, $u'(b)=-1$. If $a>0$, then  $u(a)>0$ because $u$ is decreasing: a contradiction by Claim C.  Thus $a=0$. By Claim B and because $u$ is decreasing, we have two possibilities, namely,  $u_0'=0$ and $u_0'=-1$.

Suppose now that $u'>0$ in all its domain. Then $u$ is a concave function by \eqref{eq33}.    By Claim C, we have $b=\infty$. In  the other end of the interval $I$, namely $r=a$, we have $a=0$, $u_0'=1$ or $a>0$, $u(a)=0$ and $u'(a)=1$. In both cases, as $u''<0$, we find  $\lim_{r\rightarrow\infty} u'(r)=\lim_{r\rightarrow\infty} u''(r)=0$ and  $\lim_{r\rightarrow\infty} u(r)=\infty$. By \eqref{delta}
$$\frac{u'(r)}{\sqrt{1-u'(r)^2}}=\frac{1}{r}\int_\delta^r\frac{\alpha t}{u\sqrt{1-u'^2}}dt+\mu.$$
Letting $r\rightarrow\infty$,  the left-hand side is $0$. However, and applying  twice the   L'H\^{o}pital rule, the limit of the  right-hand side is 
$$\lim_{r\rightarrow\infty}\frac{\alpha r}{u(r)}+\mu =\lim_{r\rightarrow\infty}\frac{\alpha}{ u'(r)}+\mu=\infty,$$
obtaining a contradiction.

Thus, if $u'>0$ at some point, there is a critical point $r_o$ of $u$, which will be the global maximum of $u$.  Then  $a\geq 0$ with  $u'(a)=1$ because $u$ is increasing in $(a,r_o)$.  
\end{enumerate}

\end{proof}
 
 \begin{remark} If $\alpha<0$, there exist solutions that do not meet the rotation axis, see figure \ref{fig4}, right. This case appears if $0<a<b<\infty$, where the function $u$ has a global maximum and $u'(a)=1=-u'(b)$. This extends the same property of the solution of \eqref{eq0}, where the   part of the   function $u=u(x)$ given in \eqref{eq00} that lies over   the $x$-axis is formed by successive  bounded intervals.  
 \end{remark}
 
  \begin{figure}[hbtp]
 \centering \includegraphics[width=.4\textwidth]{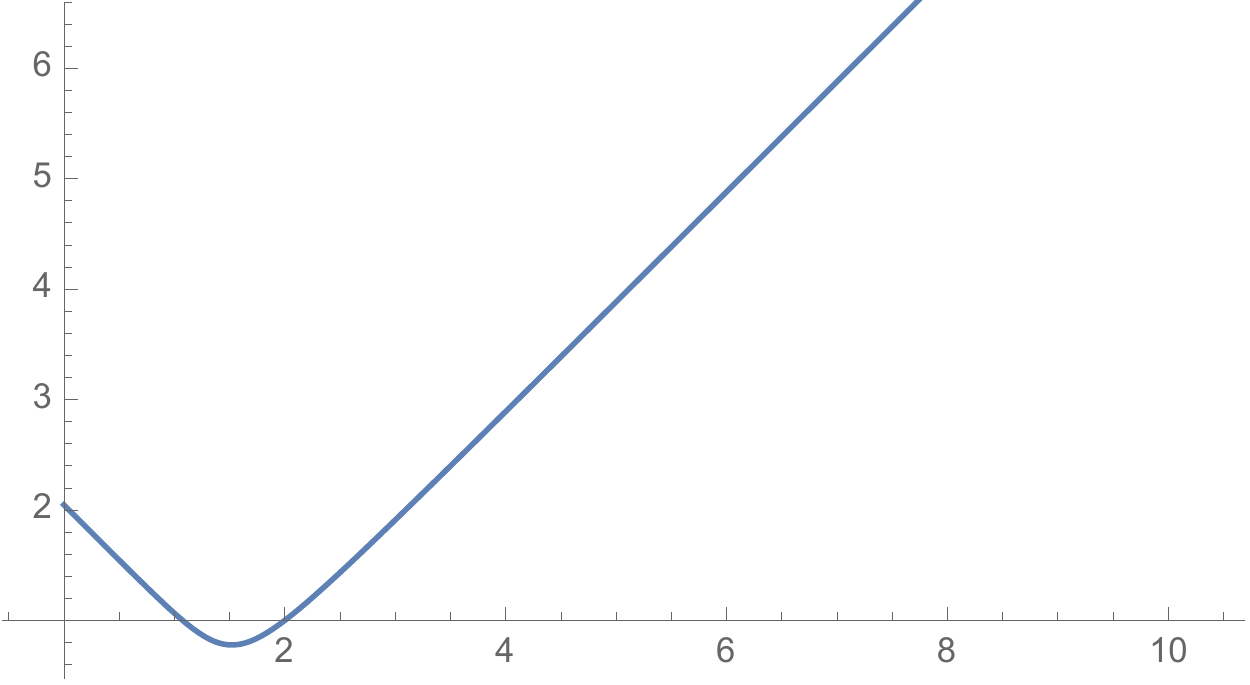}\quad \includegraphics[width=.3\textwidth]{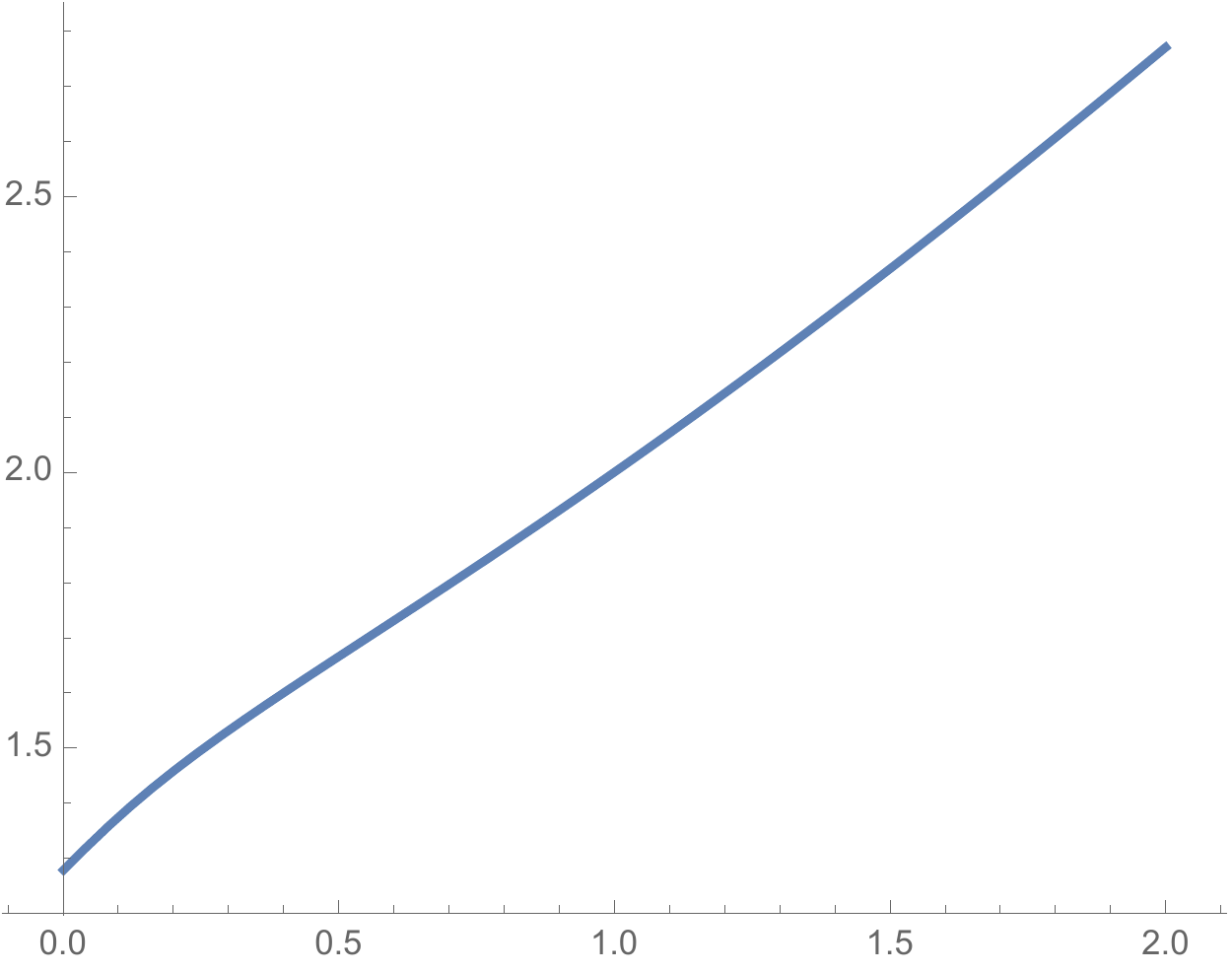}
 \caption{Solutions of \eqref{eq33}, case $\alpha>0$ and $u_0'\not=0$. Left: case $u_0'=-1$. Right: case $u'_0=1$ }
 \end{figure}\label{fig3}
 
  \begin{figure}[h]
 \centering \includegraphics[width=.32\textwidth]{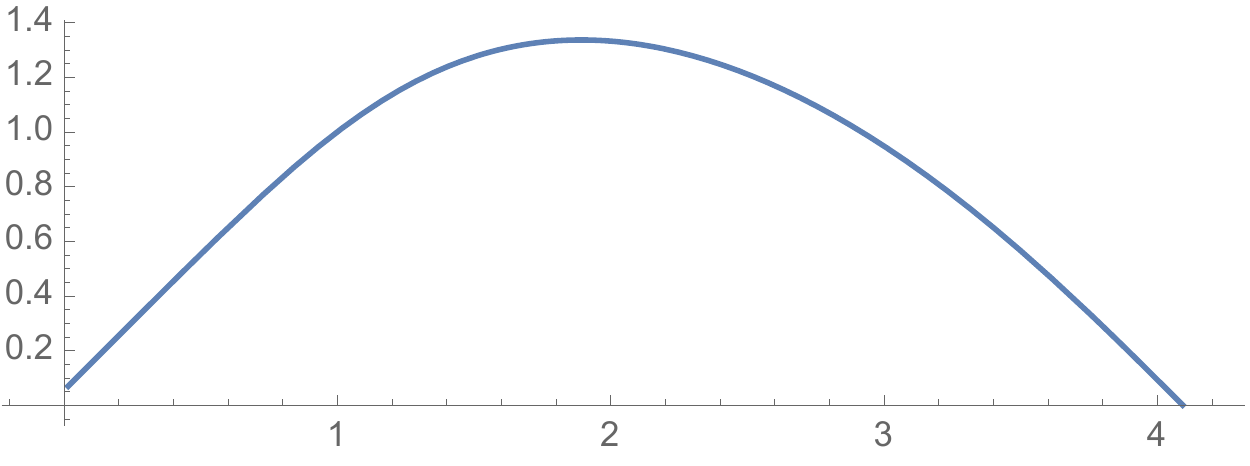}\quad \includegraphics[width=.25\textwidth]{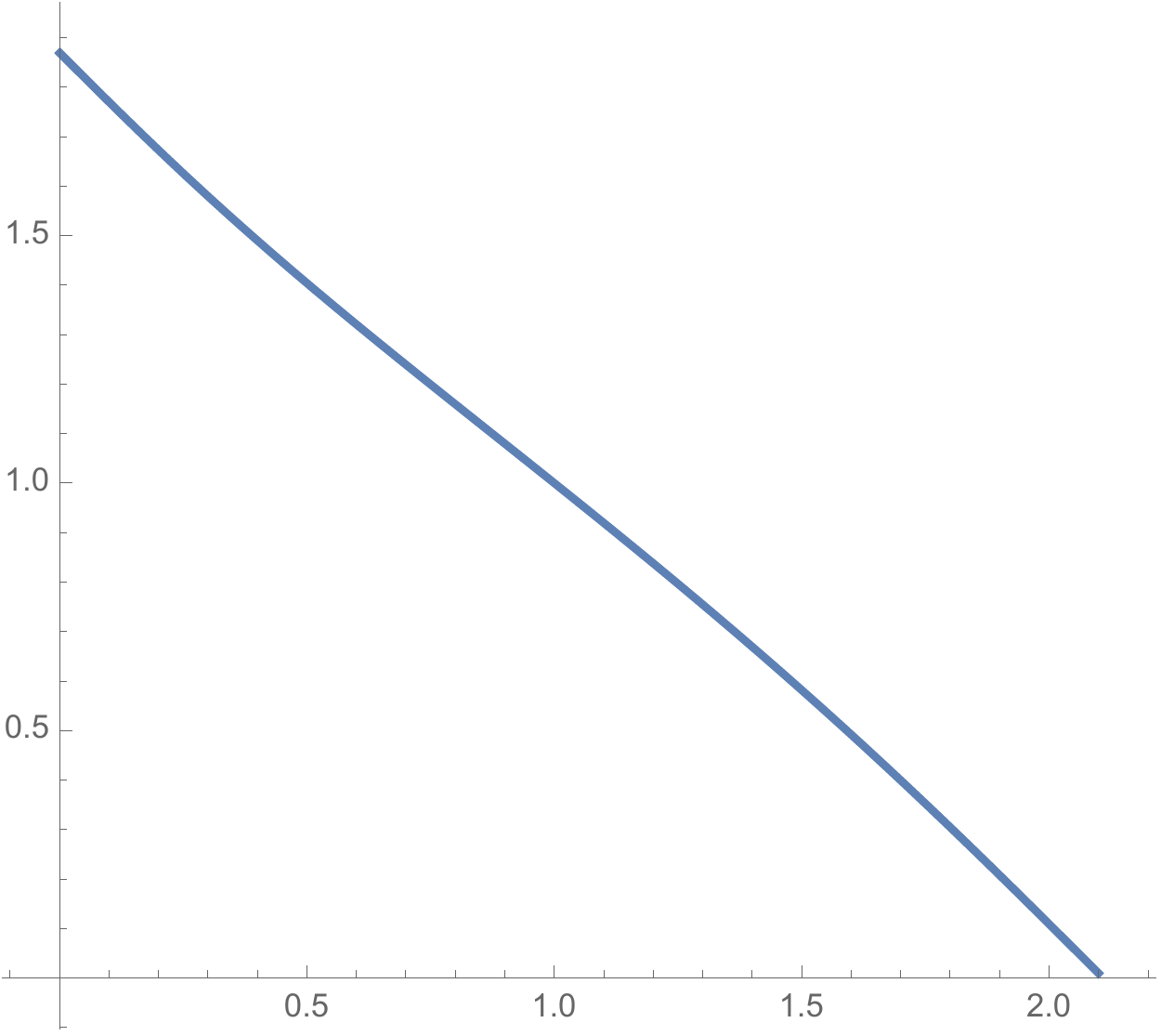}\quad \includegraphics[width=.32\textwidth]{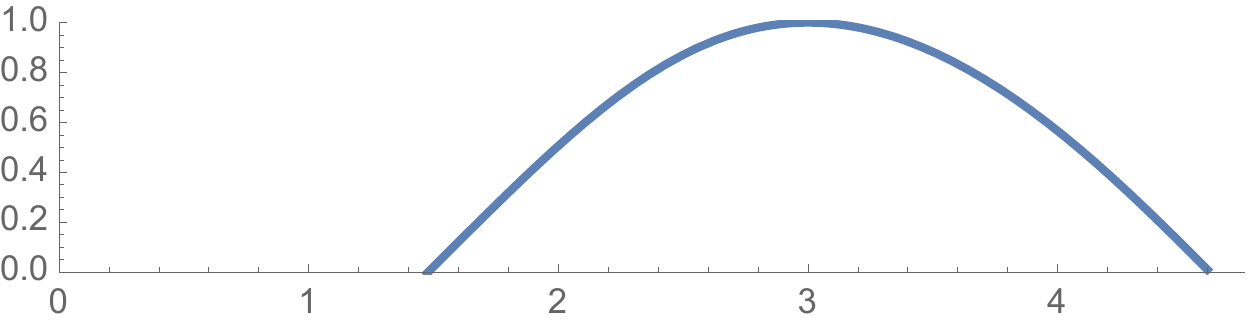}
 \caption{Solutions of \eqref{eq33}, case $\alpha<0$ and $u_0'\not=0$. Left: case $u_0'=1$. Middle: case $u'_0=-1$. Right: a solution that does not meet the rotation axis }\label{fig4}
 \end{figure}

 %%%%%
 \section{The Dirichlet problem}\label{sec4}
 %%%%%%%%%%%%%%%%%%%%%%%%%%%%%%

The Dirichlet problem of the singular maximal surface equation asks if given a positive function $\varphi:\partial\Omega\rightarrow\r$ defined in a bounded domain $\Omega\subset\r^2$,   there exists a smooth positive function $u:\overline{\Omega}\rightarrow\r$ such that \eqref{eq2} holds in $\Omega$, $u=\varphi$ on $\partial\Omega$ and $|Du|<1$ on $\overline{\Omega}$. Since any curve in a spacelike surface must be spacelike,   the graph $\Gamma$ of $\varphi$ is   spacelike. The problem is to determine the type of function $\varphi$ and the boundary $\partial\Omega$ for the solvability of the Dirichlet problem. It is expectable that  the sign $\alpha$ in \eqref{eq2} plays an important role because we have seen in Sections \ref{sec2} and \ref{sec3} the contrast of the behaviour of the invariant solutions of \eqref{eqL} depending if $\alpha$ is positive or negative.

Following similar ideas of Jenkins and Serrin  in \cite{js,se}, we will solve the Dirichlet problem if the domain $\Omega$ is mean convex. In fact,   we will establish the Dirichlet problem in the $n$-dimensional case, or equivalently, we will find singular maximal hypersurfaces in the $(n+1)$-dimensional Lorentz-Minkowski space $\l^{n+1}$ with prescribed boundary data. 

 Recall that a bounded domain $\Omega\subset\r^n$ is said to be mean convex if $\partial\Omega$ has nonnegative mean curvature $H_{\partial\Omega}$ with respect to the inward orientation. In case $n=2$, the mean convexity property is equivalent to the convexity of $\Omega$, but in arbitrary dimensions, the mean convexity is less restrictive than convexity. 
 
The Dirichlet problem is now formulated as follows. Let  $\Omega\subset\r^n$ be a smooth  bounded  domain and $\alpha\not=0$ a given constant.  Let  $\varphi:\partial\Omega\rightarrow\r$ be a positive spacelike smooth function. The problem is finding  a   classical  solution $u\in C^2(\Omega)\cap C^0(\overline{\Omega})$, $u>0$ in $\overline{\Omega}$,   of

\begin{equation}\label{d1}
\left\{\begin{split}
&\mbox{div}\left(\dfrac{Du}{\sqrt{1-|Du|^2}}\right)= \frac{\alpha}{u\sqrt{1-|Du|^2}}\quad \mbox{in $\Omega$}\\
&u=\varphi\quad \mbox{on $\partial\Omega$}\\
&|Du|<1\quad \mbox{in $\overline{\Omega}.$}
\end{split}\right.
\end{equation}

We solve the Dirichlet problem when the boundary data $\varphi$  has a spacelike extension in $\overline{\Omega}$.

\begin{theorem} \label{t1}
Let $\Omega\subset\r^n$ be a  bounded mean convex  domain with smooth boundary $\partial\Omega$. Assume that $\alpha<0$. If $\varphi\in C^{2}(\overline{\Omega})$   is a positive function  with $\max_{\overline{\Omega}}|D\varphi|<1$,   then there is a unique positive solution $u$ of \eqref{d1}.
\end{theorem}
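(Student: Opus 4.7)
The plan is to solve \eqref{d1} by the classical continuity method, producing a solution as the endpoint of a one-parameter family anchored at a trivially solvable problem, and to obtain uniqueness separately via the comparison principle for quasilinear elliptic operators. Embed \eqref{d1} in the family
\begin{equation*}
\mbox{div}\!\left(\frac{Du_\tau}{\sqrt{1-|Du_\tau|^2}}\right)=\tau\,\frac{\alpha}{u_\tau\sqrt{1-|Du_\tau|^2}},\qquad u_\tau|_{\partial\Omega}=\tau\varphi+(1-\tau)M_0,
\end{equation*}
where $M_0>\max_{\partial\Omega}\varphi$ is a fixed constant, so that the case $\tau=0$ is the maximal hypersurface equation with constant boundary data, solved trivially by $u_0\equiv M_0$. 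Let $T\subset[0,1]$ be the set of parameters for which a spacelike classical solution exists. Openness of $T$ follows from the implicit function theorem: the linearisation is uniformly elliptic whenever $|Du_\tau|<1$ strictly, and its zeroth-order coefficient inherits a favourable sign from $\alpha<0$, so the linearised operator is Fredholm and invertible on a space with zero boundary trace. Closedness of $T$ is the heart of the matter and reduces to $\tau$-uniform a priori estimates in $C^{1,\beta}(\overline\Omega)$.

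The estimates decompose naturally into a height bound, a boundary gradient bound, and an interior gradient bound. The height bound follows once $|Du_\tau|<1$ is known, since spacelikeness plus the boundedness of $\Omega$ forces $u_\tau$ to stay within $\mbox{diam}(\Omega)$ of its boundary values; the positive lower bound $u_\tau\geq\min_{\partial\Omega}\varphi>0$ comes from comparison against constants, using that for $\alpha<0$ every positive constant is a subsolution of the operator appearing in \eqref{d1}. For the boundary gradient bound, I would follow the Jenkins-Serrin barrier technique: construct barriers of the shape $\varphi(x)\pm g(d(x,\partial\Omega))$ on a one-sided tubular neighbourhood of $\partial\Omega$, with $g(0)=0$, $g'(0^+)$ arbitrarily large, and $g$ concave. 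The mean convexity $H_{\partial\Omega}\geq 0$ is exactly what makes the extrinsic contribution of the distance function large enough to render these functions genuine sub/supersolutions once $g$ is tuned, and the assumption $\max|D\varphi|<1$ ensures the barriers themselves remain spacelike; by comparison, $|Du_\tau|$ stays strictly below $1$ on $\partial\Omega$, uniformly in $\tau$.

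The genuinely difficult step, and the one I expect to be the main obstacle, is the \emph{interior} gradient estimate. Equation \eqref{eq2} is only degenerately elliptic as $|Du|\uparrow 1$, so the uniform boundary bound $|Du_\tau|\leq 1-\varepsilon$ does not propagate inward automatically. The standard approach, in the spirit of Bartnik-Simon and Korevaar for spacelike graphs of prescribed mean curvature, is to apply the maximum principle to an auxiliary function such as
\begin{equation*}
w=\frac{e^{\lambda u_\tau}}{\sqrt{1-|Du_\tau|^2}},
\end{equation*}
differentiating \eqref{eq2}, using the height bounds from the previous step, and tuning $\lambda$ so that the favourable sign of $\alpha$ absorbs the otherwise uncontrolled terms. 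Once $|Du_\tau|\leq 1-\varepsilon'$ is secured throughout $\overline\Omega$ uniformly in $\tau$, the equation is uniformly elliptic on each slice, Ladyzhenskaya-Ural'tseva yields $C^{1,\beta}(\overline\Omega)$, Schauder bootstrap yields $C^{2,\beta}(\overline\Omega)$, the continuity method closes, and the theorem follows; uniqueness is obtained by the standard comparison principle applied to the difference of two solutions.
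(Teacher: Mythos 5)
Your overall architecture (continuity method anchored at the maximal hypersurface equation, openness via the sign of the zeroth--order coefficient coming from $\alpha<0$, distance-function barriers at the boundary exploiting mean convexity, uniqueness by comparison) coincides with the paper's. Your $C^0$ bound via spacelikeness plus the diameter of $\Omega$ is in fact simpler than the paper's argument (Proposition \ref{pr-31} slides rescaled rotational solutions from Section \ref{sec3} down onto the graph), and your lower bound by constant subsolutions is the same. But two steps need repair.

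First, the step you flag as ``the genuinely difficult'' one is the place where the paper's argument is actually short, and your plan for it is left unexecuted. Differentiating $Q[u]=0$ in $x_k$ gives the linear elliptic equation \eqref{eq4} for $v^k=u_k$, whose zeroth-order coefficient is $\alpha(1-|Du|^2)/u^2\le 0$ precisely because $\alpha<0$; the maximum principle then forces $\max_{\overline\Omega}|Du|=\max_{\partial\Omega}|Du|$ (Proposition \ref{pr-41}), so no Korevaar/Bartnik--Simon auxiliary function $e^{\lambda u}/\sqrt{1-|Du|^2}$ is needed and no interior degeneracy issue arises. As written, your proof is incomplete at its self-declared central point, since you only conjecture that a suitable $\lambda$ exists. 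Second, your boundary barrier $\varphi\pm g(d)$ with ``$g'(0^+)$ arbitrarily large'' is impossible here: the barrier must itself be spacelike, which caps $g'(0^+)<1-\max_{\overline\Omega}|D\varphi|$, and mean convexity only supplies the sign $\Delta d\le 0$, not any quantitative largeness. The paper's Proposition \ref{pr42} resolves this by taking $h(t)=a\log(1+kb^2t)$ with $k$ small (to keep $h'(0)+|D\varphi|<\delta<1$) and $a$ small, i.e.\ $b$ large (to make $h''$ very negative near $t=0$ and force $Q[w]<0$), while the lower barrier is simply the Bartnik--Simon maximal graph $v^0$ with the same boundary values. With these two corrections your scheme closes; the rest (Schauder bootstrap, closedness, comparison for uniqueness) matches the paper.
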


The proof of Theorem \ref{t1} is accomplished by using   the Schauder theory of {\it a priori}  global estimates, the method of continuity and the Leray-Schauder fixed point theorem. Applying these techniques, we find all elements   for proving Theorem \ref{t1}. As usual,   we will utilize  the distance function $d$ to $\partial\Omega$ to construct a barrier function (\cite{gt,js,lu}).

The $C^0$ estimates will be obtained by comparing the solution of \eqref{d1} with the rotational examples studied in Section \ref{sec3}: here the hypothesis $\alpha<0$ will be essential because if    $\alpha>0$  it is not possible   to prevent that $|u|\rightarrow 0$  for a solution $u$. For the $C^1$ estimates, we need to prove that $|Du|$ is bounded away from $1$ which will be deduced   by using  barrier functions. Finally, the hypothesis $\alpha<0$ will be also used when we apply the Implicit  Function Theorem for the existence of the linearized problem associated to \eqref{d1}.

The maximum principle for elliptic equations of divergence type implies  the following result.  
 
  \begin{proposition}[Touching principle]\label{pr21} Let $\Sigma_1$ and $\Sigma_2$ be two $\alpha$-singular maximal surfaces.   If   $\Sigma_1$ and  $\Sigma_2$ have a common tangent interior point   and $\Sigma_1$ lies above $\Sigma_2$ around $p$, then $\Sigma_1$ and $\Sigma_2$ coincide at an open set around $p$. 
\end{proposition}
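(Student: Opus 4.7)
The plan is to reduce the geometric touching statement to the classical strong maximum principle for linear second-order elliptic operators. First, I would exploit the spacelike hypothesis: every spacelike plane in $\l^3$ makes an acute Euclidean angle with the $xy$-plane, so both $\Sigma_1$ and $\Sigma_2$ project diffeomorphically onto a common domain near $p$. Consequently, on a neighborhood $U\subset\r^2$ of $p_0:=\pi(p)$, each $\Sigma_i$ is the graph of a function $u_i\in C^2(U)$ with $|Du_i|<1$, satisfying the nonparametric equation \eqref{eq2}, and the hypotheses translate into $u_1\geq u_2$ in $U$ with $u_1(p_0)=u_2(p_0)$ (equality of the tangent planes then forces $Du_1(p_0)=Du_2(p_0)$).

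The main step is to linearize. Writing \eqref{eq2} in divergence form $\partial_i F^i(Du)=G(u,Du)$, with
$$F^i(q)=\frac{q^i}{\sqrt{1-|q|^2}},\qquad G(z,q)=\frac{\alpha}{z\sqrt{1-|q|^2}},$$
and subtracting the equations for $u_1$ and $u_2$, the nonnegative function $w:=u_1-u_2$ satisfies
$$\partial_i\bigl(A^{ij}(x)\,\partial_j w\bigr)=B^j(x)\,\partial_j w+C(x)\,w,$$
where $A^{ij}(x):=\int_0^1 \partial_{q_j}F^i(Du_2+t\,Dw)\,dt$, and $B^j$, $C$ come from the analogous one-parameter integral expansion of $G(u_1,Du_1)-G(u_2,Du_2)$ (yielding $C(x)=-\alpha/(u_1u_2\sqrt{1-|Du_1|^2})$ and a bounded $B^j$). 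Since $\partial_{q_j}F^i(q)$ is positive definite for $|q|<1$ and the $|Du_i|$ are uniformly bounded away from $1$ on a compact subneighborhood of $p_0$, the coefficient matrix $(A^{ij})$ is uniformly positive definite there; the $C^2$-regularity of $u_1,u_2$ makes the coefficients bounded. Thus the equation for $w$ is \emph{linear} and uniformly elliptic with bounded coefficients.

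The final step is to apply the Hopf strong minimum principle to the nonnegative solution $w$ that attains the interior value $w(p_0)=0$. The standard result (cf.\ Gilbarg--Trudinger, Ch.\ 3) yields $w\equiv 0$ on a connected neighborhood of $p_0$, which lifts to the conclusion that $\Sigma_1$ and $\Sigma_2$ coincide on an open set around $p$.

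The one point that needs a word of justification is that the zeroth-order coefficient $C(x)$ has no a priori sign, since $\alpha$ is an arbitrary nonzero real. This is not an obstacle: for a linear elliptic operator $L$ and a nonnegative solution $w$ of $Lw=0$ attaining the value $0$ at an interior point, the strong minimum principle holds without any sign hypothesis on the zeroth-order term, because the $Cw$ contribution vanishes at the minimum. Hence the touching principle is valid for every $\alpha\neq 0$, with no additional restriction.
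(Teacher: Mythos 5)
Your proposal is correct and is precisely the argument the paper leaves implicit: the paper offers no proof of Proposition \ref{pr21} beyond the remark that it follows from the maximum principle for elliptic equations of divergence type, and your reduction to local graphs, the integral linearization of the difference $w=u_1-u_2$, and the strong minimum principle is the standard way to carry that out. The only step worth tightening is the last one: rather than saying the $Cw$ term ``vanishes at the minimum,'' note that since $w\geq 0$ one has $(L-C^{+})w=-C^{+}w\leq 0$ with the new zeroth-order coefficient $C-C^{+}\leq 0$, so the Hopf strong minimum principle applies in its standard form regardless of the sign of $\alpha$.
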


We also need to formulate the   comparison principle in the context of $\alpha$-singular maximal surfaces. We write the equation of \eqref{d1} in classical notation.  Define the operator 
\begin{equation}\label{op}
\begin{split}
Q[u]&= (1-|Du|^2)\Delta u+u_iu_ju_{ij}-\frac{\alpha(1-|Du|^2)}{u}\\
&=a_{ij}(Du)u_{ij}+{\textbf b}(u,Du),
\end{split}
\end{equation}
 where 
 $$a_{ij}=(1-|Du|^2)\delta_{ij}+u_iu_j,\quad {\textbf b}= - \frac{\alpha(1-|Du|^2)}{u}.$$
 Here   $u_i=\partial u/\partial x_i$, $1\leq i\leq n$,  and we assume the summation convention of repeated indices.    It is immediate that $u$ is a solution of equation (\ref{d1}) if and only if $Q[u]=0$.   The ellipticity of  the operator $Q$ is clear  because if $A=(a_{ij})$ and $\xi\in\r^n$, then 
\begin{equation}\label{px}
(1-|p|^2)|\xi|^2\leq \xi^tA\xi=(1-|p|^2)|\xi|^2+\langle p,\xi\rangle^2\leq|\xi|^2.
\end{equation}
Moreover, this   shows that $Q$  is not uniformly elliptic.  We recall the comparison principle (\cite[Th. 10.1]{gt}).

\begin{proposition}[Comparison principle] \label{pr-43}
Let $\Omega\subset\r^n$ be a bounded domain. If $u,v\in C^2(\Omega)\cap C^0(\overline{\Omega})$ satisfy $Q[u]\geq Q[v]$ and $u\leq v$ on $\partial\Omega$, then $u\leq v$ in $\Omega$.
\end{proposition}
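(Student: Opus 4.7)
The statement is a direct application of the standard comparison principle for quasilinear elliptic operators (Gilbarg--Trudinger, Theorem 10.1), so the plan is simply to verify the structural hypotheses for $Q$ and invoke that result. Since $Q[u]=a_{ij}(Du)u_{ij}+{\textbf b}(u,Du)$ has top-order coefficients $a_{ij}$ independent of $u$, the two items to check are ellipticity of $(a_{ij})$ on the admissible set and monotonicity of ${\textbf b}$ in its scalar argument.

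Ellipticity is already recorded in \eqref{px}: the bound $(1-|p|^{2})|\xi|^{2}\leq\xi^{t}A(p)\xi\leq|\xi|^{2}$ shows that $A(p)=(a_{ij}(p))$ is positive definite whenever $|p|<1$, which holds on $\overline{\Omega}$ because $u$ and $v$ are spacelike. For the monotonicity in the scalar argument, a direct computation gives
$$\frac{\partial\,{\textbf b}}{\partial u}(u,p)=\frac{\alpha(1-|p|^{2})}{u^{2}}\leq 0$$
provided $\alpha\leq 0$, which is precisely the sign regime of Theorem~\ref{t1} and the condition that makes the proposition meaningful in our setting.

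With these two conditions in hand, the maximum-principle argument is standard. Set $w:=u-v$ and assume for contradiction that $w$ attains a positive maximum at an interior point $x_{0}\in\Omega$. Then $Du(x_{0})=Dv(x_{0})=:p$ and $(w_{ij}(x_{0}))$ is negative semidefinite. Subtracting the two differential inequalities at $x_{0}$ yields
$$0\leq Q[u](x_{0})-Q[v](x_{0})=a_{ij}(p)\,w_{ij}(x_{0})+{\textbf b}(u(x_{0}),p)-{\textbf b}(v(x_{0}),p).$$
The first summand is non-positive by ellipticity together with the sign of the Hessian, while the second is strictly negative since ${\textbf b}$ is strictly decreasing in $u$ and $w(x_{0})>0$. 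This contradicts the displayed inequality, forcing $w\leq 0$ in $\Omega$. A cleaner way to package the same computation is to use the mean value theorem and linearize $Q[u]-Q[v]$ along the segment from $v$ to $u$, obtaining a linear elliptic operator with non-positive zeroth-order coefficient to which the classical weak maximum principle applies directly.

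The main subtlety, and the only reason the argument is not entirely cosmetic, is the non-uniform ellipticity of $Q$: the matrix $A(p)$ degenerates as $|p|\to 1$, so the comparison principle cannot be stated at the level of generality of arbitrary quasilinear operators without invoking the spacelike control. It is precisely the uniform separation $|Du|,|Dv|<1$ on the compact set $\overline{\Omega}$ (implicit in requiring $u,v$ to belong to the admissible class for \eqref{d1}) that restores uniform ellipticity and allows the citation of Gilbarg--Trudinger to go through verbatim.
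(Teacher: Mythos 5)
Your proposal is correct and takes essentially the same route as the paper, which proves this proposition simply by recalling Theorem 10.1 of Gilbarg--Trudinger: you verify the structure conditions that citation requires (ellipticity of $(a_{ij})$ and monotonicity of $\mathbf{b}$ in its scalar argument, which indeed needs $\alpha\leq 0$ and $u,v>0$ --- hypotheses the paper leaves implicit in the statement) and then supply the standard interior-maximum argument. No gaps.
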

 
Notice that if $\alpha<0$, the classical theory implies the uniqueness of solutions of the Dirichlet problem.
 
\begin{proposition}\label{pr-u}
Let $\Omega\subset\r^n$ be a bounded domain and $\alpha<0$.  The solution of \eqref{d1}, if exists, is unique.
  \end{proposition}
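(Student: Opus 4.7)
The proof is a straightforward application of the comparison principle (Proposition \ref{pr-43}), so the plan is essentially to verify that the hypothesis $\alpha<0$ is exactly what is needed to invoke it, and then to symmetrize. Suppose $u$ and $v$ are two solutions of \eqref{d1}. Since both are solutions, $Q[u]=Q[v]=0$ in $\Omega$, and $u=v=\varphi$ on $\partial\Omega$. The strategy is to apply Proposition \ref{pr-43} twice, once to conclude $u\leq v$ and once with the roles reversed to obtain $v\leq u$, which together force $u=v$.

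The only thing worth checking is that Proposition \ref{pr-43} really does apply to our operator $Q$. Recall from \eqref{op} that
\[
Q[u]=a_{ij}(Du)u_{ij}+{\textbf b}(u,Du),\qquad {\textbf b}(u,Du)=-\frac{\alpha(1-|Du|^2)}{u}.
\]
The comparison principle of Gilbarg--Trudinger (Theorem 10.1) requires, in addition to ellipticity (already verified in \eqref{px}), that ${\textbf b}(u,p)$ be non-increasing in the variable $u$ for each fixed $p$. A direct computation gives
\[
\frac{\partial {\textbf b}}{\partial u}(u,p)=\frac{\alpha(1-|p|^2)}{u^2}.
\]
Because solutions are required to satisfy $u>0$ and $|Du|<1$, and because $\alpha<0$, this partial derivative is $\leq 0$ on the admissible set of $(u,p)$. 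Hence ${\textbf b}$ is non-increasing in $u$, and Proposition \ref{pr-43} is applicable.

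With this in hand, the uniqueness is immediate. From $Q[u]=0\geq 0=Q[v]$ and $u\leq v$ on $\partial\Omega$ (in fact equality), Proposition \ref{pr-43} yields $u\leq v$ in $\Omega$. Interchanging the roles of $u$ and $v$ in the same argument gives $v\leq u$ in $\Omega$. Therefore $u=v$.

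There is no real obstacle; the only delicate point is the sign condition on $\alpha$, which enters exclusively through the monotonicity of ${\textbf b}$ in $u$. It is worth remarking (as the introduction to Theorem \ref{t1} already hints) that this is one of several places in the Dirichlet analysis where the sign of $\alpha$ is decisive: if $\alpha>0$, the partial derivative above changes sign and the standard comparison machinery cannot be invoked in this simple form, so uniqueness cannot be established by this argument.
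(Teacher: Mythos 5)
Your proof is correct and follows essentially the same route as the paper, which simply remarks that uniqueness for $\alpha<0$ follows from the classical theory via the comparison principle (Proposition \ref{pr-43}); you have merely made explicit the verification that $\mathbf{b}(u,p)=-\alpha(1-|p|^2)/u$ is non-increasing in $u$ on the admissible set, which is exactly the point where $\alpha<0$ enters. The symmetric double application of the comparison principle is the standard and intended argument.
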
   
 
In arbitrary dimension, it holds the property that any horizontal translation and any dilation from a point of $\r^n\times\{0\}$ preserves equation \eqref{d1}. Similarly, Theorem \ref{t32}  holds where now  \eqref{rot} is 
$$  \left(r\dfrac{u'}{\sqrt{1-u'^2}}\right)'=r^{n-1}\dfrac{\alpha}{u\sqrt{1-u'^2}}.$$
 We establish the solvability of \eqref{d1} in the particular case that $\Omega$ is a   ball of $\r^n$ and $\varphi$ is a positive constant. 
 
 \begin{proposition}\label{pr25} Let $\alpha<0$ and   $B_R\subset\r^n$ be a round ball of radius $R>0$. If $c>0$, then there is a unique radial  solution $u$ of  (\ref{d1}) in $B_R$ with  $u=c$ on $\partial B_R$.
 \end{proposition}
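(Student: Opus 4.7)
The plan is to reduce the problem to the radial ODE and run a shooting argument, using the dilation invariance of \eqref{d1} to parametrize spherically symmetric solutions by a single scalar (the height at the center) and the intermediate value theorem to hit the target boundary value $c$.

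For a radial function $u=u(r)$, $r=|x|$, the equation in \eqref{d1} reduces to the $n$-dimensional analogue of \eqref{rot},
$$
\left(r^{n-1}\dfrac{u'}{\sqrt{1-u'^2}}\right)'=r^{n-1}\dfrac{\alpha}{u\sqrt{1-u'^2}},
$$
and I impose $u'(0)=0$ for smoothness at the origin. The $n$-dimensional version of Theorem \ref{pr-exi} together with Theorem \ref{t32} (case $\alpha<0$, $u_0'=0$) supplies, for each $u_0>0$, a unique solution $u^{(u_0)}\in C^2([0,b(u_0)))$ with $u^{(u_0)}(0)=u_0$, which is strictly decreasing on its maximal interval $[0,b(u_0))$, satisfies $|u'|<1$ there, and extends continuously to $b(u_0)<\infty$ with $u^{(u_0)}(b(u_0))=0$ and $\lim_{r\to b(u_0)^-}u'(r)=-1$.

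The next step is to exploit the dilation invariance noted before Remark \ref{re1}: a direct check shows that if $u$ solves the radial equation on $[0,b)$ then so does $v(r):=\lambda\,u(r/\lambda)$ on $[0,\lambda b)$ for every $\lambda>0$, with $v(0)=\lambda u(0)$. Fixing the normalized solution $u^{(1)}$ with $u^{(1)}(0)=1$, defined on $[0,\beta)$ with $\beta:=b(1)$, uniqueness in the initial value problem forces $u^{(u_0)}(r)=u_0\,u^{(1)}(r/u_0)$ and hence $b(u_0)=\beta u_0$. Define the shooting map
$$
F:(R/\beta,\infty)\longrightarrow(0,\infty),\qquad F(u_0)=u^{(u_0)}(R)=u_0\,u^{(1)}\!\left(\tfrac{R}{u_0}\right).
$$
Continuity of $F$ follows from continuous dependence on the initial data. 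As $u_0\downarrow R/\beta$, the argument $R/u_0\uparrow\beta$, so $u^{(1)}(R/u_0)\to 0$ and $F(u_0)\to 0$; as $u_0\to\infty$, $R/u_0\to 0$ and $u^{(1)}(R/u_0)\to 1$, so $F(u_0)\to\infty$. The intermediate value theorem yields a $u_0^*>R/\beta$ with $F(u_0^*)=c$, and the restriction of $u^{(u_0^*)}$ to $[0,R]$ delivers a radial $C^2$ solution of \eqref{d1} on $B_R$ with $u=c$ on $\partial B_R$ and $|Du|<1$ on $\overline{B_R}$ (because $R<b(u_0^*)=\beta u_0^*$). Uniqueness among all solutions, radial or not, is immediate from Proposition \ref{pr-u}.

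The main obstacle is upgrading the continuous dependence on the initial height from the local statement of Theorem \ref{pr-exi} (valid only on a small $[0,\delta]$) to the whole interval $[0,R]$, so that $F$ is genuinely continuous. This amounts to propagating the local dependence along the orbit; once we know from Theorem \ref{t32} that $u^{(u_0)}>0$ and $|u'|<1$ strictly on every compact $[0,R]\subset[0,b(u_0))$, the ODE becomes a regular (non-singular) system on $[\delta,R]$ for any $\delta>0$ and standard smooth dependence on parameters applies there, gluing with the local result on $[0,\delta]$.
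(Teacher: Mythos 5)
Your proof is correct and follows essentially the same route as the paper: both reduce to the radial problem, invoke Theorem \ref{pr-exi} and Theorem \ref{t32} for the normalized solution with height $1$ at the origin (decreasing to $0$ at a finite radius), and then use dilation invariance plus an intermediate value argument to match the boundary value $c$, with uniqueness from Proposition \ref{pr-u}. Your shooting map $F(u_0)=u_0\,u^{(1)}(R/u_0)$ is exactly the paper's intersection of the graph of $v$ with the line $z=cr/R$ under the substitution $u_0=R/r_o$, and note that the explicit formula for $F$ already makes its continuity immediate, so your closing worry about propagating continuous dependence on initial data is not actually needed.
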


\begin{proof}
After a horizontal translation, we suppose that the origin $O\in\r^n$ is the center of $B_R$. By Proposition \ref{pr-exi}, let $v=v(r)$ be the solution of  \eqref{rot} with $v(0)=1$. Recall that Theorem \ref{t32}  asserts  that the maximal domain of $v$ is a ball $B_b$ for some $b>0$ with $v(b)=0$.  In the $(r,v)$-plane, consider the line $x_{n+1}=cr/R$. Since $v$ is  a decreasing function, the graph of $v$ meets this line at one point $r=r_o$, $u(r_o)=cr_o/R$. If $\lambda=R/r_o$, then $u_\lambda(r)=\lambda u(r/\lambda)$ is a solution of \eqref{d1} with $u_\lambda(R)=c$. 
\end{proof}

 Following a standard scheme, we start by finding $C^0$ estimates by using   the rotational solutions of \eqref{eqL}.   In the following result, we do not require the mean convexity of  $\Omega$. 
 
  \begin{proposition}  \label{pr-31}
   Let $\Omega\subset\r^n$ be a bounded domain and   $\alpha<0$. If  $u$ is a positive solution of \eqref{d1},   there exists a constant $C_1=C_1(\alpha,\Omega,\varphi)>0$  such that 
\begin{equation}\label{eh}
 \min_{\partial\Omega}\varphi\leq u\leq C_1 \quad \mbox{in $\Omega$}.
\end{equation} 
\end{proposition}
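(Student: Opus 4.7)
The plan is to establish the two inequalities independently, both via the comparison principle of Proposition~\ref{pr-43}. For the lower bound I will use the constant function as a barrier, and for the upper bound I will use a rotationally symmetric solution produced by Proposition~\ref{pr25} as a cap over $\overline{\Omega}$. The sign hypothesis $\alpha < 0$ enters decisively in both steps.

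For the lower bound, let $m = \min_{\partial\Omega}\varphi > 0$ and view $m$ as a constant function on $\overline{\Omega}$. Since the derivatives of $m$ vanish, the expression \eqref{op} for $Q$ gives
\[
Q[m] = -\frac{\alpha}{m} > 0 = Q[u],
\]
the strict inequality using $\alpha < 0$. Combined with $m \leq \varphi = u$ on $\partial\Omega$, Proposition~\ref{pr-43} (applied with $m$ in the role of the lower function) yields $u \geq m$ in $\Omega$, which is the left inequality in \eqref{eh}.

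For the upper bound, choose a point $p_0 \in \r^n$ and a radius $R > 0$ with $\overline{\Omega} \subset B_R(p_0)$, and set $c = \max_{\partial\Omega}\varphi > 0$. Because horizontal translations preserve equation \eqref{d1}, Proposition~\ref{pr25} produces (after recentering) a radial solution $w$ of \eqref{d1} on $B_R(p_0)$ with $w \equiv c$ on $\partial B_R(p_0)$. The construction in Section~\ref{sec3} shows that $w$ is strictly decreasing in the radial variable (this is precisely the case $\alpha < 0$, $u_0' = 0$ of Theorem~\ref{t32}), so $w \geq c \geq \varphi$ on $\partial\Omega \subset \overline{B_R(p_0)}$. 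Since $Q[u] = Q[w] = 0$ in $\Omega$, Proposition~\ref{pr-43} now forces $u \leq w$ in $\Omega$, and therefore
\[
u \leq \max_{\overline{B_R(p_0)}} w = w(p_0) =: C_1,
\]
a constant depending only on $\alpha$, on $R$ (hence on $\Omega$) and on $c$ (hence on $\varphi$).

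I expect the one point requiring care to be ensuring that a radial barrier with the prescribed boundary value $c$ actually exists on a ball large enough to contain $\overline{\Omega}$; this is exactly what Proposition~\ref{pr25} supplies, so no new analysis is needed. It is worth pointing out that both halves of the argument genuinely use $\alpha < 0$: for $\alpha > 0$ one has $Q[m] < 0$, so the constant barrier fails, and the rotational solutions of Theorem~\ref{t32} are then \emph{increasing} and tend to infinity, hence cannot serve as upper caps. This is consistent with the remark made in the paragraph preceding the proposition that a uniform upper bound need not exist when $\alpha > 0$.
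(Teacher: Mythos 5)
Your proof is correct, and for the upper bound it takes a genuinely different route from the paper's. On the lower bound the two arguments agree in substance: the paper simply notes that the right-hand side of \eqref{d1} is negative and invokes the minimum principle, which is exactly what your comparison of $u$ with the constant $m$ encodes (and your verification that $Q[m]=-\alpha/m>0=Q[u]$ is the correct way to feed this into Proposition \ref{pr-43}). For the upper bound the paper does not use a single fixed barrier: it takes the radial solution $v$ of \eqref{rot} with $v(0)=1$, whose graph is a cap vanishing on the boundary sphere of its maximal domain, forms the dilated family $v_\lambda$, starts with $\lambda$ large so that the graph of $u$ lies below $\Sigma_\lambda$, and shrinks $\lambda$ until the first contact, which by the touching principle must occur on the boundary graph of $\varphi$; this fixes $\lambda_0=\lambda_0(\Omega,\varphi)$ and gives $u\le v_{\lambda_0}$. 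You instead invoke Proposition \ref{pr25} --- which precedes Proposition \ref{pr-31} in the paper, so there is no circularity --- to place one radial solution $w$ with constant boundary value $c=\max_{\partial\Omega}\varphi$ over a ball containing $\overline{\Omega}$, use the monotonicity of the radial profile for $\alpha<0$ (Theorem \ref{t32}) to get $w\ge c\ge u$ on $\partial\Omega$, and conclude with one application of the comparison principle. Your version is more direct: it replaces the continuity/first-contact argument by a single comparison, and it makes the dependence $C_1=w(p_0)=C_1(\alpha,\Omega,\varphi)$ completely explicit; the price is that it leans on the solvability statement of Proposition \ref{pr25}, whereas the paper's sliding argument only needs the qualitative shape of the caps. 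Your closing remark about why both halves fail for $\alpha>0$ is also accurate and consistent with the paper's discussion.
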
 
  
  \begin{proof} Since   the right-hand side of   (\ref{d1}) is  negative, then $\inf_\Omega u=\min_{\partial\Omega}\varphi$ by the maximum principle. This proves the left inequality of \eqref{eh}.

 For the upper estimate of \eqref{eh}, we consider the radial solution $v$ of \eqref{rot} with $v(0)=1$ and let $\{v_\lambda:\lambda>0\}$ where $v_\lambda(r)=\lambda v(r/\lambda)$. Denote $B_R$ the maximal domain of $v$, with $v(R)=0$ and let $\Sigma_\lambda$ denote  the graph of $v_\lambda$. Take $\lambda>0$ sufficiently big so the graph $S$ of $u$ is included in the domain of the halfspace $x_{n+1}>0$ bounded by $\Sigma_\lambda\cup B_{\lambda R}$.  Let    $\lambda$ decrease to $0$ until the first time $\lambda_0$ that $\Sigma_\lambda$ meets $\Sigma_u$. By the maximum principle, the first  contact   must occur at some boundary point of $S$. Then this point is a point of $\partial\Omega$ or a point of $\partial S$. Since $\partial S$ is the graph of $\varphi$, this value $\lambda_0$  depends on $\Omega$ and $\varphi$. Consequently, $u\leq v_{\lambda_0}\leq \sup_\Omega v_{\lambda_0}$. The proof finishes by letting $C_1=\sup_\Omega v_{\lambda_0}$, which  depends only on $\alpha$, $\Omega$ and $\varphi$. 
   \end{proof}
 
 The next step   to prove Theorem \ref{t1} is the derivation of  estimates for $|Du|$. This is done firstly proving the the supremum of $|Du|$ is attained at some boundary point.  In the next result, the assumption that $\alpha$ is negative is essential.

\begin{proposition}[Interior gradient estimates] \label{pr-41} 
   Let $\Omega\subset\r^n$ be a bounded domain and $\alpha<0$. If  $u\in C^2(\Omega)\cap C^1(\overline{\Omega})$ is a positive solution of \eqref{d1}, then 
     $$\max_{\overline{\Omega}}|Du|=\max_{\partial\Omega}|Du|.$$
\end{proposition}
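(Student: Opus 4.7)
The plan is to establish the interior gradient estimate by a Bernstein-type maximum-principle argument applied to the auxiliary function $\phi := |Du|^2\in C^0(\overline{\Omega})\cap C^2(\Omega)$. Let $L := a_{ij}(Du)\partial_{ij}$ denote the second-order part of the operator in \eqref{op}, with $A=(a_{ij})$ positive definite by \eqref{px}. I will show that if $\phi$ attains its maximum at some interior point $x_0\in\Omega$ with $\phi(x_0)>0$, then $L\phi(x_0)>0$, which contradicts the fact that $D^2\phi(x_0)\le 0$ together with the positive definiteness of $A$ forces $L\phi(x_0)\le 0$. The degenerate case $\phi\equiv 0$ is trivial ($u$ is then constant), so this contradiction yields $\max_{\overline{\Omega}}|Du|=\max_{\partial\Omega}|Du|$.

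For the computation, the decisive input is the first-order condition at the interior maximum, namely $\phi_k(x_0)=0$, i.e.\ $u_j u_{jk}(x_0)=0$ for every $k$. Writing $\phi_{k\ell}=2u_{jk}u_{j\ell}+2u_ju_{jk\ell}$ gives
\[
L\phi = 2\,a_{k\ell}u_{jk}u_{j\ell}+2\,a_{k\ell}u_ju_{jk\ell}.
\]
The first summand equals $2\,\mathrm{tr}\!\left(A(D^2u)^2\right)$, which is nonnegative because $A$ is positive definite and $(D^2u)^2$ is positive semidefinite. For the second summand I would differentiate the relation $a_{k\ell}u_{k\ell}=\alpha(1-|Du|^2)/u$ (i.e.\ $Q[u]=0$) with respect to $x_j$ and contract against $u_j$. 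The orthogonality identity $u_j u_{jk}(x_0)=0$ causes every term generated by the $Du$-dependence of $a_{k\ell}=(1-|Du|^2)\delta_{k\ell}+u_k u_\ell$ to vanish at $x_0$, and kills the contribution of $u_j\partial_j(1-|Du|^2)=-2u_j u_i u_{ij}$ on the right-hand side; what survives is
\[
a_{k\ell}u_j u_{jk\ell}(x_0) = -\alpha\,\frac{(1-|Du|^2)\,|Du|^2}{u^2}\bigg|_{x_0}.
\]

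Combining both pieces, at the hypothetical interior maximum one has
\[
L\phi(x_0) = 2\,a_{k\ell}u_{jk}u_{j\ell}(x_0)-2\alpha\,\frac{(1-|Du|^2)\,|Du|^2}{u^2}\bigg|_{x_0}.
\]
Since $\alpha<0$, $u>0$, $1-|Du|^2>0$ by the spacelike condition, and $|Du|^2=\phi(x_0)>0$ by assumption, the second summand is strictly positive; together with the nonnegative first summand this gives $L\phi(x_0)>0$, the desired contradiction.

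The principal technical obstacle is the bookkeeping that verifies the vanishing of all cross terms generated by $\partial_j a_{k\ell}$ at $x_0$; it is routine but requires repeated invocation of $u_j u_{jk}(x_0)=0$. The hypothesis $\alpha<0$ enters precisely where it matters, fixing the sign of the forcing term on the right-hand side; with the opposite sign the Bernstein scheme collapses, consistent with the paper's indication that $\alpha<0$ is essential for the interior gradient estimate.
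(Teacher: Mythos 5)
Your argument is correct, but it follows a genuinely different route from the paper. The paper differentiates $Q[u]=0$ with respect to $x_k$ to obtain, for each partial derivative $v^k=u_k$ (and hence for every directional derivative $\partial_e u$), a linear elliptic equation whose zero-order coefficient is $\alpha(1-|Du|^2)/u^2\leq 0$; the scalar maximum principle for such equations then forbids an interior maximum of $|\partial_e u|$, and taking the supremum over unit directions $e$ gives the statement for $|Du|$. You instead run a Bernstein-type argument directly on $\phi=|Du|^2$: at a hypothetical interior maximum the first-order condition $u_ju_{jk}(x_0)=0$ kills all the cross terms coming from the $Du$-dependence of $a_{k\ell}$ and from $u_j\partial_j(1-|Du|^2)$, leaving $L\phi(x_0)=2a_{k\ell}u_{jk}u_{j\ell}-2\alpha(1-|Du|^2)|Du|^2/u^2>0$, which contradicts $L\phi(x_0)\leq 0$ forced by $D^2\phi(x_0)\leq 0$ and the positive definiteness of $A$. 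I have checked the computation and it is right; the sign hypothesis $\alpha<0$ enters in exactly the same place in both proofs, namely through the term $\alpha(1-|Du|^2)/u^2$. What the paper's route buys is brevity (one citation of the maximum principle, no second-order test and no bookkeeping of cross terms); what your route buys is that it works with $|Du|^2$ as a single scalar quantity, avoiding the small extra step of passing from the individual (or directional) derivatives to the full gradient, a step the paper glosses over. Both arguments use third derivatives of $u$ in the interior, so both implicitly invoke interior elliptic regularity to upgrade the stated $C^2(\Omega)$ hypothesis; this is standard and not a gap specific to your version.
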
 

\begin{proof}
  Let $v^k=u_k$, $1\leq k\leq n$. By differentiating  $Q[u]=0$ with respect to $x_k$, we find for each $k$,
 \begin{equation}\label{eq4}
 \left((1-|Du|^2)\delta_{ij}+u_iu_j\right)v_{ij}^k+2\left(u_i\Delta u+u_ju_{ij}-\frac{\alpha u_i}{u}\right)v_i^k+\frac{\alpha(1-|Du|^2)}{u^2}v^k=0.
 \end{equation}
 Equation (\ref{eq4})  is a linear elliptic equation in  $v^k$. Because $\alpha<0$,    the coefficient for $v^k$ is negative and the maximum principle (\cite[Th. 3.7]{gt}) implies that    $|v^k|$, and then $|Du|$ has not an interior maximum. In particular,   the maximum of $|Du|$ on  $\overline{\Omega}$ is attained at some boundary point,  proving the result.  
 \end{proof}

As a consequence of    Proposition \ref{pr-41},  the problem of  finding    {\it a priori}  estimates of $|Du|$ reduces to get these estimates  along $\partial\Omega$. With this purpose, we prove   that $u$  admits   barriers from above and from below along $\partial\Omega$. It is now when we use the assumption of the mean convexity  of $\Omega$.

     \begin{proposition}[Boundary gradient estimates]  \label{pr42}
   Let $\Omega\subset\r^n$ be a bounded  mean convex   domain and $\alpha<0$. If  $u\in C^2(\Omega)\cap C^1(\overline{\Omega})$ is a positive solution of \eqref{d1}, then there is a constant 
   $$C_2=C_2(\alpha,\Omega, C_1,\|\varphi\|_{1;\overline{\Omega}},\|\varphi\|_{2;\overline{\Omega}})<1$$
    such that 
   $$\max_{\partial\Omega}|Du|\leq C_2.$$
    
\end{proposition}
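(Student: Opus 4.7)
The plan is to construct upper and lower barrier functions for $u$ in a one-sided tubular neighborhood of $\partial\Omega$, apply the comparison principle (Proposition~\ref{pr-43}) to sandwich $u$ between them, and then read off the bound on the normal derivative $\partial_\nu u$ along $\partial\Omega$. Fix $\mu>0$ small enough that the distance function $d(x):=\mathrm{dist}(x,\partial\Omega)$ belongs to $C^2(\overline{\Omega_\mu})$, where $\Omega_\mu:=\{x\in\Omega:d(x)<\mu\}$. Two standard facts will be used: $|Dd|\equiv1$, whence $D^2d\cdot Dd\equiv0$; and mean convexity of $\Omega$ implies $\Delta d\leq 0$ on $\overline{\Omega_\mu}$ (shrinking $\mu$ if necessary), because $-\Delta d$ equals $(n-1)$ times the mean curvature of the level sets $\{d=\text{const}\}$, which is nonnegative close to $\partial\Omega$.

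Define the upper barrier $w^+(x):=\varphi(x)+\psi(d(x))$, where $\psi\in C^2([0,\mu])$ will be chosen with $\psi(0)=0$, $\psi'>0$ and $\psi''<0$. A direct expansion using $\Delta w^+=\Delta\varphi+\psi''+\psi'\Delta d$, $Dw^+=D\varphi+\psi'Dd$, and $D^2 d\cdot Dd=0$ leads to
$$
Q[w^+]\;=\;\psi''\bigl[1-|D\varphi|^2+(D\varphi\cdot Dd)^2\bigr]+\psi'(1-|Dw^+|^2)\,\Delta d+\mathcal{R},
$$
where the remainder $\mathcal{R}$ is bounded on $\Omega_\mu$ by a constant depending on $\alpha$, $\|\varphi\|_{2;\overline\Omega}$, $\|d\|_{C^2}$, $\max\psi'$, and $\min_{\partial\Omega}\varphi$ (via the positive term $-\alpha(1-|Dw^+|^2)/w^+$, which is controlled because $w^+\geq\min\varphi>0$). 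Since $\max|D\varphi|<1$, the bracketed factor is bounded below by $1-\max|D\varphi|^2>0$, and mean convexity makes the second term nonpositive. An explicit $\psi$, for instance of the form $\psi(t)=\tfrac{A}{B}(1-e^{-Bt})$ with $A,B>0$ tuned, can be chosen so that (i) $\max|D\varphi|+\psi'(0)<1$ (whence $|Dw^{\pm}|<1$ on $\Omega_\mu$), (ii) $\psi(\mu)\geq C_1-\min_{\partial\Omega}\varphi$, and (iii) $-\psi''$ is uniformly large enough on $[0,\mu]$ to force $Q[w^+]\leq 0$. Since $w^+=\varphi=u$ on $\partial\Omega$ and $w^+\geq C_1\geq u$ on $\{d=\mu\}$, Proposition~\ref{pr-43} yields $u\leq w^+$ throughout $\Omega_\mu$. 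Consequently, at every $x_0\in\partial\Omega$,
$$
\partial_\nu u(x_0)\;\leq\;\partial_\nu w^+(x_0)\;=\;\partial_\nu\varphi(x_0)+\psi'(0).
$$
An entirely analogous construction of the lower barrier $w^-:=\varphi-\psi(d)$ gives $Q[w^-]\geq 0$ and $\partial_\nu u\geq\partial_\nu\varphi-\psi'(0)$. Combined with the observation that the tangential component of $Du|_{\partial\Omega}$ equals that of $D\varphi|_{\partial\Omega}$ (because $u=\varphi$ on $\partial\Omega$), one obtains $|Du(x_0)|\leq\max_{\overline\Omega}|D\varphi|+\psi'(0)=:C_2<1$ uniformly in $x_0\in\partial\Omega$.

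The main obstacle is reconciling the three competing constraints on $\psi$: (i) $\psi'(0)<1-\max|D\varphi|$ to force $C_2<1$ and keep the barriers spacelike; (ii) $\psi(\mu)\geq C_1-\min\varphi$ so that $w^{\pm}$ dominate/are dominated by $u$ on $\{d=\mu\}$; and (iii) $-\psi''$ uniformly large on $[0,\mu]$ to absorb $\mathcal{R}$. Since concavity of $\psi$ forces $\psi(\mu)\leq\psi'(0)\mu$, conditions (i)--(ii) together impose a lower bound on the width $\mu$, while (iii) demands that $-\psi''$ grow with $\|\varphi\|_{2;\overline\Omega}$, $\|d\|_{C^2}$ and $|\alpha|/\min\varphi$. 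Three ingredients of the hypotheses make this balance possible: the sign $\alpha<0$ (otherwise the term $-\alpha(1-|Dw^+|^2)/w^+$ would enter with the wrong sign and $-\psi''$ could not compensate it), the mean convexity $\Delta d\leq 0$ (which turns the otherwise indefinite term $\psi'\Delta d$ into a favorable one), and the positive lower bound $u\geq\min_{\partial\Omega}\varphi$ from Proposition~\ref{pr-31} (used to bound $1/w^+$). Verifying that the parameters $A,B,\mu$ can be chosen consistently, and that the resulting bound is uniform over $\partial\Omega$ (by compactness), is the delicate technical step.
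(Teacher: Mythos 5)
Your overall strategy (distance-function barriers in a one-sided collar, comparison principle, read off the normal derivative) is the paper's strategy, and your upper barrier $w^+=\varphi+\psi(d)$ is essentially the paper's $w=\varphi+h(d)$ with $h(t)=a\log(1+kb^2t)$; the particular concave profile is immaterial. You diverge on the lower barrier, and there your construction has a problem: once $\psi(\mu)$ is large enough to force $w^-\leq u$ on $\{d=\mu\}$ (you need roughly $\psi(\mu)\geq\max_{\overline\Omega}\varphi-\min_{\partial\Omega}\varphi$), the function $w^-=\varphi-\psi(d)$ need not remain positive inside the collar; where $w^-\leq 0$ the operator $Q$ in \eqref{op} is undefined and the zeroth-order term $-\alpha(1-|Dw^-|^2)/w^-$ loses the favorable sign on which $Q[w^-]\geq 0$ rests, so Proposition \ref{pr-43} cannot be invoked. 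The paper avoids this entirely by taking as lower barrier the solution $v^0$ of the maximal surface equation ($\alpha=0$) with the same boundary data $\varphi$ (existence from Bartnik--Simon \cite{bs}), for which $Q[v^0]=-\alpha(1-|Dv^0|^2)/v^0>0=Q[u]$ globally and $v^0=u$ on $\partial\Omega$; you should adopt that, or prove $w^->0$, which conflicts with the height requirement in general.

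The more serious issue is the step you explicitly defer (``verifying that the parameters $A,B,\mu$ can be chosen consistently''): this is not routine bookkeeping but the crux of the Lorentzian problem, and as you have set it up it cannot be completed. Your constraints (i) $\psi'(0)<1-\max_{\overline\Omega}|D\varphi|$ and (ii) $\psi(\mu)\geq C_1-\min_{\overline\Omega}\varphi$, together with concavity ($\psi(\mu)\leq\psi'(0)\,\mu$), force $\mu\geq(C_1-\min_{\overline\Omega}\varphi)/(1-\max_{\overline\Omega}|D\varphi|)$, whereas $\mu$ must lie below the reach of $\partial\Omega$ for $d$ to be $C^2$ and for the collar to be regular. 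Since $C_1$ is the global height bound from Proposition \ref{pr-31} and has no reason to be small, these requirements are incompatible in general. In the Euclidean (uniformly elliptic or minimal-surface) setting this tension is absent because $\psi'(0)$ may be taken arbitrarily large; here the spacelike constraint $|Dw^+|<1$ caps $\psi'(0)$ and creates a genuine obstruction, which is precisely why Bartnik--Simon derive boundary gradient bounds for spacelike graphs by a different mechanism. So the gap is real: either an additional compatibility hypothesis, or a different barrier exploiting the spacelike extension of $\varphi$, is needed to close the argument. (For what it is worth, the paper's own choice $\epsilon=1/(kb)$ is subject to the same arithmetic, so flagging and resolving this point would improve on the source rather than merely reproduce it.)
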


 \begin{proof} 
 We consider the operator $Q[u]$ defined (\ref{op}). For a lower barrier for $u$, we take  the solution $v^0$ of the Dirichlet problem of the maximal surface equation in $\Omega$ with  the same boundary  $\varphi$. The function $v^0$ is  the solution of \eqref{d1} for $\alpha=0$ whose existen is assured (\cite[Th. 4.1]{bs}). Then
 $$Q[v^0]=-\frac{\alpha(1-|Dv^0|^2)}{v^0}>0=Q[u].$$
 Since $v^0=u$ on $\partial\Omega$, we conclude  $v^0<u$ in $\Omega$ by the comparison principle.

 We now construct an upper barrier  for $u$ by means of  the distance function in a small tubular neighborhood of $\partial\Omega$ in $\Omega$.     
 
 Consider  the distance function $d(x)=\mbox{dist}(x,\partial\Omega)$ and let $\epsilon>0$ sufficiently small so  $\mathcal{N}_\epsilon=\{x\in\overline{\Omega}: d(x)<\epsilon\}$ is a tubular neighborhood of $\partial\Omega$.   We   parametrize $\mathcal{N}_\epsilon$  using normal coordinates $x\equiv (t,\pi(x)) \in\mathcal{N}_\epsilon$, where   $x\equiv\pi(x)+t\nu(\pi(x))$ for some $t\in [0,\epsilon)$,   $\pi:\mathcal{N}_\epsilon\rightarrow\partial\Omega$ is the orthogonal projection and $\nu$ is the unit   normal vector to $\partial\Omega$ pointing to $\Omega$.  A straightforward computation  leads to that   $d$ is   $C^2$,  $|Dd|(x)=1$,  and $\Delta d(x) \leq -(n-1)H_{\partial\Omega}(\pi(x))$  for all $x\in\mathcal{N}_\epsilon$. Because   $\Omega$ is mean convex, then $\Delta d(x)\leq 0$.

Define in $\mathcal{N}_\epsilon$ a function $w=h\circ d+\varphi$, where  we use the same symbol $\varphi$ for a   spacelike extension of $\varphi$  into $\Omega$. The function $h$ is defined as  
\begin{equation}\label{ht}
h(t)=a\log(1+kb^2t),\ b, k >0,\ a=\frac{C_1}{\log(1+b)},
\end{equation}
where $C_1$ is the constant that appears in \eqref{eh} and  $b$ and $k$  will be chosen later. Let us observe that $w>0$ and that we  require that $|Dw|<1$.   The computation of  $Q[w]$ leads to
$$Q[w]=a_{ij}(h''d_id_j+h'd_{ij}+\varphi_{ij})-\frac{\alpha}{w}(1-|Dw|^2).$$
From $|Dd|=1$, it follows that $\langle D(Dd)_x\xi,Dd(x)\rangle=0$ for all $\xi\in\r^n$. If  $\{e_i\}_i$ is the canonical basis of $\r^n$ and $\xi=e_i$, we find $d_{ij}d_j=0$. Thus
\begin{eqnarray*}
w_iw_jd_{ij}&=&(h'd_i+\varphi_i)(h'd_j+\varphi_j)d_{ij}=(h'^2d_i+2h'\varphi_i)d_jd_{ij}+\varphi_i \varphi_jd_{ij}\\
&=&\varphi_{i}\varphi_jd_{ij}\leq \varphi_i^2\lambda_i^d\leq 0,
\end{eqnarray*}
where $\lambda_i^d$ are the eigenvalues of $D^2d$, which all are not positive because $D^2d$ is negative semidefinite. Using this inequality, the definition of $a_{ij}$ in (\ref{op}) and 
\eqref{px}, it follows that 
$$a_{ij}d_{ij}=(1-|Dw|^2)\Delta d+w_iw_j d_{ij}\leq(1-|Dw|^2)\Delta d\leq 0.$$
Again \eqref{px} implies    $a_{ij}d_id_j\geq 1-|Dw|^2$ and $a_{ij}\varphi_{ij}\leq |D^2\varphi|$, where $|D^2\varphi|=\sum_{ij}\sup_{\overline{\Omega}}|\varphi_{ij}|$. Since $h'>0$ and $\Delta d\leq 0$, we find
\begin{equation}\label{qw}
\begin{split}
Q[w]&\leq h''(1-|Dw|^2)+h'\Delta d(1-|Dw|^2)-\frac{\alpha}{w}(1-|Dw|^2)+a_{ij}\varphi_{ij}\\
&\leq  \left(h''-\frac{\alpha}{w} \right)(1-|Dw|^2)+|D^2\varphi|.
\end{split}
\end{equation}
We now study the spacelike condition  $|Dw|<1$. The computation of $|Dw|$ and the Cauchy-Schwarz inequality gives 
$$|Dw|^2=h'^2+|D\varphi|^2+2h'\langle Dd,D\varphi\rangle\leq (h'+|D\varphi|)^2.$$
Because $h'>0$ and $h'$ is decreasing on $t$, we deduce
\begin{equation}\label{grad}
|Dw|\leq h'+|D\varphi|\leq h'(0)+|D\varphi|\leq akb^2+ \mu\quad \mbox{in $\overline{\Omega}$},\end{equation}
where $\mu=\|D\varphi\|_{0;\overline{\Omega}}<1$. Fix a constant $\delta$  with the property   $\mu<\delta<1$. Then it is possible to choose $k$ sufficiently small in \eqref{grad} so $|Dw|\leq akb^2+ \mu<\delta$. Let $\beta=1-\delta^2$. If $h''-\alpha/w<0$, then \eqref{qw} implies 
\begin{equation}\label{qw2}
Q[w]\leq \beta \left(h''-\frac{\alpha}{w} \right)+\|D^2\varphi\|_{0;\overline{\Omega}}.
\end{equation}
The right-hand side in \eqref{qw2} is a function defined in  $\partial\Omega\times [0,\epsilon]$.
Let $\varphi_0=\min_{\overline{\Omega}}\varphi>0$ and we evaluate this function  at $t=0$, obtaining 
$$\beta \left(-ak^2b^4-\frac{\alpha}{\varphi} \right)+\|D^2\varphi\|_{0;\overline{\Omega}}\leq 
\beta \left(-\frac{(\delta-\mu)^2}{a}-\frac{\alpha}{\varphi_0}\right) +\|D^2\varphi\|_{0;\overline{\Omega}}.$$
If $b$ is sufficiently big, then $a\rightarrow 0$, hence the right-hand side in this inequality is   negative. By compactness of $\partial\Omega\times [0,\epsilon]$ and by continuity, let us  take $b$ sufficiently large enough in \eqref{qw2} so $Q[w]<0$. Even more, we require $b$ so large such that  $1/(kb)<\epsilon$. We now change the tubular neighborhood $\mathcal{N}_\epsilon$ by replacing $\epsilon$ by $\epsilon=1/(kb)$ and we denote $\mathcal{N}_\epsilon$ again.

 In order to assure that $w$ is a local upper barrier  in $\mathcal{N}_{\epsilon}$ for the Dirichlet problem \eqref{d1},  we need to have 
\begin{equation}\label{mm}
u\leq w\quad \mbox{in $\partial\mathcal{N}_\epsilon$}.
\end{equation}
 In $\partial\mathcal{N}_\epsilon\cap\partial\Omega$,   the distance function  is $d=0$, so $w=\varphi=u$. On the other hand, in $\partial\mathcal{N}_\epsilon\setminus\partial\Omega$, and  because   $\epsilon=1/(kb)$, we find 
$$w=h(\epsilon)+\varphi=\frac{C_1}{\log(1+b)}\log(1+kb^2\epsilon)+\varphi=C_1+\varphi.$$   
By  Proposition \ref{pr-31}, we have $u\leq C_1$ and   we deduce $u<w$ in $\mathcal{N}_\epsilon\setminus\partial\Omega$.   Definitively, we find $Q[w]<0=Q[u]$ and $u\leq w$ in $\partial\mathcal{N}_\epsilon$, concluding that      $u\leq w$ in $\mathcal{N}_\epsilon$ by the comparison principle. 
  
Consequently, we have proved the existence of lower and upper barriers for $u$  in $\mathcal{N}_\epsilon$, namely, $v^0\leq u\leq w$ in $\mathcal{N}_\epsilon$. Hence we deduce 
$$\max_{\partial\Omega}|Du|\leq C_2:=\max\{\|Dw\|_{0;\overline{\Omega}}, \|Dv^0\|_{0;\overline{\Omega}}\}$$
  and both values $\|Dw\|_{0;\overline{\Omega}}, \|Dv^0\|_{0;\overline{\Omega}}$ depend only on the initial data of the Dirichlet problem.  This completes the proof of proposition. 
    \end{proof}

With all above ingredients, we are in position to prove Theorem \ref{t1}.

 %%%%%%%%%%%%%%
\begin{proof}[Proof of Theorem \ref{t1}]
 %%%%%%%%%%%%%%%%%%%%%%%

We establish the solvability  of the Dirichlet problem   \eqref{d1} by the method of continuity (see   \cite[Sec. 17.2]{gt}). Define  the family of Dirichlet  problems parametrized by $t\in [0,1]$  
 $$\mathcal{P}_t: \left\{\begin{array}{cll}
Q_t[u]&=&0 \mbox{ in $\Omega$}\\
 u&=& \varphi \mbox{ on $\partial\Omega,$}
 \end{array}\right.$$
 where 
   $$Q_t[u]= (1-|Du|^2)\Delta u+u_iu_ju_{ij}-\frac{\alpha t(1-|Du|^2)}{u}.$$
 
The graph $\Sigma_{u_t}$ of a solution of $u_t$   is a $(t\alpha)$-singular maximal surface.  Notice that if $t=0$, $Q_0[u]=0$ is the maximal surface equation and the solution of $\mathcal{P}_0$ is the function $v^0$ that  appeared in Proposition \ref{pr42}.   As usual, let 
$$\mathcal{A}=\{t\in [0,1]: \exists u_t\in C^{2,\gamma}(\overline{\Omega}), u_t>0, Q_t[u_t]=0, {u_t}_{|\partial\Omega}=\varphi\}.$$ 
The proof consists to show that $1\in \mathcal{A}$. For this, we prove that $\mathcal{A}$ is a non-empty open and closed subset of $[0,1]$.

\begin{enumerate}
\item  The set  $\mathcal{A}$ is not empty. This is because  $0\in\mathcal{A}$ since $v^0$ is the solution of $\mathcal{P}_0$. 

\item The set $\mathcal{A}$ is open in $[0,1]$. Given $t_0\in\mathcal{A}$, we need to prove that there is an $\epsilon>0$ such that $(t_0-\epsilon,t_0+\epsilon)\cap [0,1]\subset\mathcal{A}$. Define the map $T(t,u)=Q_t[u]$ for $t\in\r$ and $u\in  C^{2,\gamma}(\overline{\Omega})$. Then $t_0\in\mathcal{A}$ if and only if $T(t_0,u_{t_0})=0$. If we show that the derivative  of $Q_t$ with respect to $u$, say $(DQ_t)_u$, at the point $u_{t_0}$ is an isomorphism, it follows from the Implicit Function Theorem the existence of an open set $\mathcal{V}\subset C^{2,\gamma}(\overline{\Omega})$, with $u_{t_0}\in \mathcal{V}$, and a $C^1$ function $\xi:(t_0-\epsilon,t_0+\epsilon)\rightarrow \mathcal{V}$ for some $\epsilon>0$, such that $\xi(t_0)=u_{t_0}>0$ and  $T(t,\xi(t))=0$ for all $t\in (t_0-\epsilon,t_0+\epsilon)$: this guarantees that $\mathcal{A}$ is an open  set of  $[0,1]$.

The proof that $(DQ_t)_u$ is one-to-one is equivalent to prove that for any $f\in C^\gamma(\overline{\Omega})$, there is a unique solution $v\in C^{2,\gamma}(\overline{\Omega})$ of the linear equation $Lv:=(DQ_t)_u(v)=f$ in $\Omega$ and $v=\varphi$ on $\partial\Omega$. The computation of $L$ was done in Proposition \ref{pr-41}, obtaining 
$$Lv=(DQ_t)_uv=a_{ij}(Du)v_{ij}+\mathbf{b}_i(u,Du,D^2u)v_i+{\textbf c}(u,Du)v,$$
where $a_{ij}$  are defined in (\ref{op}) and 
$$\mathbf{b}_i=2\left( \Delta u-\frac{\alpha t}{u}\right)u_i+2u_ju_{ij},\quad {\textbf c}=\frac{\alpha t(1-|Du|^2)}{u^2}.$$
Since $\alpha<0$,  ${\textbf c}\leq 0$ and the existence and uniqueness is assured by standard theory (\cite[Th. 6.14]{gt}).

\item The set $\mathcal{A}$ is closed in $[0,1]$. Let $\{t_k\}\subset\mathcal{A}$ with $t_k\rightarrow t\in [0,1]$. For each $k\in\mathbb{N}$, there exists $u_{t_k}\in C^{2,\gamma}(\overline{\Omega})$, $u_{t_k}>0$,  such that $Q_{t_k}[u_{t_k}]=0$ in $\Omega$ and $u_{t_k}=\varphi$ in $\partial\Omega$. Define the set
$$\mathcal{S}=\{u\in C^{2,\gamma}(\overline{\Omega}): \exists t\in [0,1]\mbox{ such that }Q_{t}[u]=0 \mbox{ in }\Omega, u_{|\partial\Omega}=\varphi\}.$$
Then $\{u_{t_k}\}\subset\mathcal{S}$. If we prove that the set $\mathcal{S}$ is bounded in $C^{1,\beta}(\overline{\Omega})$ for some $\beta\in[0,\gamma]$, and since $a_{ij}=a_{ij}(Du)$ in (\ref{op}), then Schauder theory proves that $\mathcal{S}$ is bounded in $C^{2,\beta}(\overline{\Omega})$, in particular, $\mathcal{S}$ is precompact in $C^2(\overline{\Omega})$  (see Th. 6.6 and Lem. 6.36 in \cite{gt}). Thus there exists a subsequence $\{u_{k_l}\}\subset\{u_{t_k}\}$ converging in $C^2(\overline{\Omega})$ to some $u\in C^2(\overline{\Omega})$. Since $T:[0,1]\times C^2(\overline{\Omega})\rightarrow C^0(\overline{\Omega})$ is continuous, it follows $Q_t[u]=T(t,u)=\lim_{l\rightarrow\infty}T(t_{k_l},u_{k_l})=0$ in $\Omega$. Moreover, $u_{|\partial\Omega}=\lim_{l\rightarrow\infty} {u_{k_l}}_{|\partial\Omega}=\varphi$ on $\partial\Omega$, so $u\in C^{2,\gamma}(\overline{\Omega})$ and consequently, $t\in \mathcal{A}$.

The above reasoning asserts that   $\mathcal{A}$ is  closed in $[0,1]$ provided we find a constant $M$ independent of $t\in\mathcal{A}$,  such that  
$$\|u_t\|_{C^1(\overline{\Omega})}=\sup_\Omega |u_t|+\sup_\Omega|Du_t|\leq M.
$$
However the $C^0$ and $C^1$ estimates for the function $u_1$, that is, when the parameter  $t$ is $t=1$, are enough as we  now   see.  

The $C^0$ estimates for $u_t$ follow with the comparison principle. Indeed, let $t_1<t_2$, $t_i\in [0,1]$, $i=1,2$. Then $Q_{t_1}[u_{t_1}]=0$ and 
$$Q_{t_1}[u_{t_2}]=-\frac{(t_1-t_2)\alpha(1-|Du_{t_2}|^2)}{u_{t_2}}<0=Q_{t_1}[u_{t_1}]$$
because $\alpha<0$. Since $u_{t_1}=\varphi=u_{t_2}$ on $\partial\Omega$, the comparison principle yields $u_{t_1}<u_{t_2}$ in $\Omega$. This proves that the solutions $u_{t_i}$ are ordered in increasing sense according the parameter $t$. By (\ref{eh}), we find
\begin{equation}\label{ut}
\sup_\Omega u_t \leq \sup_\Omega u_1 \leq C_1.
\end{equation}

 In order to derive  the gradient estimates for the solution $u_t$,  the same computations  obtained  in   Proposition \ref{pr42}     conclude that $\sup_{\partial\Omega}|Du_t|$ is bounded by a constant depending on $\alpha$, $\Omega$, $\varphi$ and  $\|u_t\|_{0;\overline{\Omega}}$. Now  (\ref{ut}) implies that   the value  $\|u_t\|_{0;\overline{\Omega}}$ is bounded by $C_1$, which depends only on $\alpha$, $\varphi$ and $\Omega$, but not on $t$. 

\end{enumerate}
The above three steps prove the  existence part in Theorem \ref{t1}. The uniqueness is consequence  of Proposition \ref{pr-u} and this completes the proof of theorem.

 \end{proof}

A consequence of Theorem \ref{t1} is the solvability of   the Plateau problem if  $\alpha<0$ in the following situation.

\begin{corollary} Let $\Gamma$ be a spacelike $(n-1)$-submanifold of $\l^{n+1}$ with an one-to-one orthogonal projection $C$ on the hyperplane of equation $x_{n+1}=0$ such that $C$ is the boundary of a mean convex simply-connected domain $\Omega$.  Let $\alpha<0$. If $\Gamma$ has a spacelike extension to a graph on $\Omega$, then there exists a unique $\alpha$-singular maximal hypersurface $S$ spanning $\Gamma$. Moreover, $S$ is a graph on $\Omega$.
\end{corollary}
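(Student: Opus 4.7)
The plan is to recognize this as a direct geometric reformulation of Theorem \ref{t1}. First, I would translate the submanifold data into graph data. Since the orthogonal projection $\pi\colon\Gamma\to C=\partial\Omega$ is one-to-one, $\Gamma$ is the graph of a function $\varphi\colon\partial\Omega\to\r$, and since $\alpha$-singular maximal hypersurfaces live in $\{x_{n+1}>0\}$ by definition, we may assume $\varphi>0$. The spacelike condition on $\Gamma$ translates into $|D\varphi|<1$ along $\partial\Omega$.

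Next, I would use the hypothesis that $\Gamma$ admits a spacelike extension to a graph over $\Omega$ to produce a function $\tilde\varphi\in C^2(\overline\Omega)$ with $\tilde\varphi_{|\partial\Omega}=\varphi$, $\tilde\varphi>0$, and $|D\tilde\varphi|<1$ in $\overline\Omega$; compactness of $\overline\Omega$ then gives $\max_{\overline\Omega}|D\tilde\varphi|<1$. A small technical point is ensuring positivity throughout $\overline\Omega$: if the given extension is only nonnegative or fails to be positive at some interior points, one convexly combines it with a sufficiently large positive constant and then rescales so as to keep the boundary values equal to $\varphi$ while preserving the spacelike condition (since the constant has zero gradient). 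This step, though essentially bookkeeping, is the main obstacle; everything else is immediate from Theorem \ref{t1}.

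With $\tilde\varphi$ in hand, the hypotheses of Theorem \ref{t1} are met: $\Omega$ is bounded and mean convex with smooth boundary, $\alpha<0$, and $\tilde\varphi$ is positive with $\max_{\overline\Omega}|D\tilde\varphi|<1$. The theorem produces a unique positive $u\in C^2(\Omega)\cap C^0(\overline\Omega)$ with $|Du|<1$ solving \eqref{d1} and $u_{|\partial\Omega}=\varphi$. Define $S$ to be the graph of $u$. Then $S$ is a spacelike $\alpha$-singular maximal hypersurface, it lies in $\{x_{n+1}>0\}$, and $\partial S=\{(x,\varphi(x)):x\in\partial\Omega\}=\Gamma$, so $S$ spans $\Gamma$ and is a graph over $\Omega$.

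For the uniqueness statement, within the class of hypersurfaces that are graphs over $\Omega$ with boundary $\Gamma$, Proposition \ref{pr-u} applies directly. To promote this to uniqueness among all $\alpha$-singular maximal hypersurfaces spanning $\Gamma$, one would invoke the touching principle (Proposition \ref{pr21}) together with a vertical sliding argument: any competitor $S'$ spanning $\Gamma$ can be translated vertically until it lies above $S$, and then translated back; the first interior contact forces $S'=S$ by Proposition \ref{pr21}. This geometric uniqueness step, rather than any analytic difficulty, is the only point where additional care beyond citing Theorem \ref{t1} is needed.
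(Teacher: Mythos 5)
Your existence argument is fine and matches the paper: the hypotheses translate directly into the data required by Theorem \ref{t1}, and the solution $u$ it produces gives the graph $S$. The problem is your uniqueness step. The vertical sliding argument you propose does not work for this equation: a vertical translation does \emph{not} preserve the $\alpha$-singular maximal surface equation \eqref{eqL}, because the right-hand side contains the factor $1/z$ (the paper makes exactly this point in Remark \ref{re1}: rigid motions that change the $z$-coordinate destroy the equation). So after translating a competitor $S'$ vertically, the translated surface is no longer an $\alpha$-singular maximal hypersurface, and the touching principle (Proposition \ref{pr21}), which requires \emph{both} surfaces to satisfy the equation, cannot be invoked at the first contact point. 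The sliding scheme that works for minimal or CMC surfaces is simply unavailable here.

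The paper's route to uniqueness is different and is precisely where the simple connectivity of $\Omega$ (which your argument never uses) enters: if $M$ is any $\alpha$-singular maximal hypersurface spanning $\Gamma$, the spacelike condition forces the orthogonal projection $p\colon M\to\Omega$ to be a local diffeomorphism, hence a covering map; simple connectivity of $\Omega$ then makes $p$ a diffeomorphism, so $M$ is automatically a graph over $\Omega$. Once every competitor is known to be a graph, Proposition \ref{pr-u} (uniqueness of graphical solutions, valid since $\alpha<0$) finishes the proof. You should replace your sliding argument with this covering-space argument. Separately, your fix for positivity of the extension (convexly combining with a constant ``and then rescaling to keep the boundary values'') does not obviously preserve the boundary values; but this is a minor bookkeeping issue compared with the uniqueness gap.
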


\begin{proof}   Theorem \ref{t1} asserts the existence of an $\alpha$-singular maximal hypersurface $S$ whose boundary is $\Gamma$ and $S$ is a graph on $\Omega$. Assume that $M$ is other  such a hypersurface. The property that  $M$ is spacelike implies that   the orthogonal projection $p:\r^{n+1}\rightarrow\r^n=\r^n\times\{0\}$, $p(x)=(x_1,\ldots,x_n)$ is a local diffeomorphism between $M$ and $\Omega$. In particular, $p:M\rightarrow\Omega$ is a covering map and since $\Omega$ is simply connected, the map $p$ is a diffeomorphism, in particular, $M$ is a graph on $\Omega$. Finally,  the uniqueness of \eqref{eqL} when $\alpha$ is negative concludes that $M=S$.
\end{proof}
 
   %%%%%%%%%%%%%%%%%%%%%%%%%%%%%%%%%%

\end{document}